\documentclass{amsart}

\usepackage{
amsfonts,
latexsym,
amssymb,
amsmath,
amsthm,
mathabx,
enumerate,
color,
verbatim,
mathrsfs,
epigraph,
}

\usepackage{url}

\newcommand{\labbel}[1]{\label{#1} [[{\bf #1}]]}  
\newcommand{\bibbitem}[1]{\bibitem{#1} [[{\bf #1}]]}  
\renewcommand{\bibbitem}{\bibitem}   \renewcommand{\labbel}{\label}

\urlstyle{sf}
 
\newtheorem{theorem}{Theorem}[section]
\newtheorem{lemma}[theorem]{Lemma}
\newtheorem{proposition}[theorem]{Proposition} 
\newtheorem{corollary}[theorem]{Corollary} 
 
\newtheorem{claim}[theorem]{Claim} 
\newtheorem*{claim*}{Claim}

\theoremstyle{definition}
\newtheorem{definition}[theorem]{Definition}

\newtheorem{assumption}[theorem]{Assumption} 
\newtheorem{problem}[theorem]{Problem} 
\newtheorem{problems}[theorem]{Problems} 

\theoremstyle{remark}
\newtheorem{remark}[theorem]{Remark}

\newtheorem{convention}[theorem]{Convention} 
\newtheorem{example}[theorem]{Example}

\newtheorem*{acknowledgement}{Acknowledgement}

\numberwithin{equation}{section}

 \allowdisplaybreaks[1]

\begin{document}
 
\title{A short way to directed J{\'o}nsson terms}

\author{Paolo Lipparini} 
\address{Dipartimento di Matematica\\Viale della  Ricerca
 Scientifica\\Universit\`a di Roma ``Tor Diretta'' 
\\I-00133 ROME ITALY (currently, retired)}
\urladdr{http://www.mat.uniroma2.it/\textasciitilde lipparin}

\keywords{congruence distributive variety, J{\'o}nsson terms, 
 Gumm terms, directed terms}

\subjclass[2010]{08B05; 08B10}
\thanks{Work performed under the auspices of G.N.S.A.G.A.}

\begin{abstract}
We show that a variety 
with  J{\'o}nsson terms $t_1, \dots, t_{n-1}$ 
has  directed J{\'o}nsson terms $d_1, \dots, d_{n-1}$
for the same value of the indices, solving a problem raised by
Kazda et al.
Refined results are obtained for locally finite varieties.
\end{abstract} 

\maketitle

\section{Introduction} \labbel{intro} 
 
Congruence distributive varieties can be characterized by means of 
the existence of J{\'o}nsson terms. More recently,
Kazda et al. \cite{adjt} provided another characterization
by means of a ``directed'' variant of J{\'o}nsson terms.   
This novel characterization and generalizations have found applications
in computational complexity \cite{Ba,BK,K},
as well as in classical universal algebra \cite{Ba,GM,jds,uar}.  

The construction from \cite{adjt} provides a rather
large number of terms, as evaluated in \cite[Section 7]{adjt},
where the problem is asked whether this value can be lowered.
We answer the question, proving the quite unexpected result
 that if congruence distributivity of some  variety $\mathcal V$ 
is witnessed by
a certain number of J{\'o}nsson terms,  then $\mathcal V$ has the very same
(or, possibly, smaller) 
number of \emph{directed}  J{\'o}nsson terms.
This is the
best possible 
result; see 
\cite[Theorem 5.2(i)]{daysh}.

We now recall the basic definitions.
Given a variety $\mathcal V$, 
a sequence $t_0,t_1, \dots, \allowbreak  t_{n-1}, t_n$ of J{\'o}nsson terms is 
a sequence satisfying the following equations in
all algebras in $\mathcal V$.
\begin{align}
\labbel{j1}    \tag{J1}
  x  &\approx t_i(x,y, x), 
&&\text{for } 0 < i < n,   
\\
\labbel{j2}    \tag{J2}     
  x&\approx  t_0(x,y,z),  
\\ 
\labbel{j3}    \tag{J3}     
 t_{i}(x,x,z) &\approx 
t_{i+1}(x,x,z),
&&\text{for $i$ even, } 0 \leq i <  n,   
 \\
\labbel{j4}    \tag{J4}     
t_{i}(x,z,z) &\approx 
t_{i+1}(x,z,z), 
&&\text{for $i$ odd, } 0 \leq i < n,   
\\
\labbel{j5}    \tag{J5}     
 t_{n}(x,y,z)&\approx z. 
\end{align}   

A  sequence of \emph{directed J{\'o}nsson terms} is a sequence of terms
satisfying \eqref{j1}, \eqref{j2} and \eqref{j5}, as well as  
\begin{align} \labbel{d}    \tag{D}      
t_{i}(x,z,z) &\approx 
t_{i+1}(x,x,z), 
&&\text{for every $i$, } 0 \leq i < n.
 \end{align}
 In the case of directed terms there is no distinction
between even and odd indices.
To the best of our knowledge, directed J{\'o}nsson 
terms first appeared (unnamed) in \cite{Z}, motivated by
\cite{MK}.  

A variety  with  J{\'o}nsson terms
(directed J{\'o}nsson terms) $t_0,t_1, \dots, t_{n-1}, t_n$
is said to be \emph{$n$-distributive} (\emph{$n$-directed distributive}).  
In both cases, the terms $t_0$ and  $t_n$  
are projections, hence the conditions can be reformulated 
by talking only about $t_1, \dots, t_{n-1}$.
For example, a variety has 
directed J{\'o}nsson terms if and only if 
$\mathcal V$ has terms $t_1, \dots, t_{n-1}$
satisfying  \eqref{j1}, \eqref{d}, $x \approx t_1(x,x,z)$ and    
$t_{n-1}(x,z,z) \approx z$. 

J{\'o}nsson \cite{JD} proved that a variety $\mathcal V$ is congruence distributive 
if and only if $\mathcal V$  has J{\'o}nsson terms, for some $n$.
Kazda et al. \cite{adjt} proved that a variety $\mathcal V$ 
has a sequence of J{\'o}nsson terms if and only if   $\mathcal V$ 
has a sequence of directed J{\'o}nsson terms.

The proofs in \cite{adjt} provide very long chains of terms, see
\cite[Section 7]{adjt}. Here we show that if $\mathcal V$ 
is $n$-distributive, then $\mathcal V$ is $n$-directed distributive.  
Notice that, on the other hand, for $n \geq 3$, an $n$-directed distributive variety 
is not necessarily $n$ distributive; see \cite[Theorem 5.2]{daysh}.  
  
The basic idea of our proof is actually very simple. Recall that a
\emph{Pixley} term for some variety is a ternary term $t$ such that
$ x \approx t(x,z,z) \approx t(x,y,x)$ and $t(x,x,z) \approx z$. 
As well-known, a variety with a Pixley term is $2$-distributive,
as witnessed by the term $t^\diamondsuit (x,y,z)=t(x,t(x,y,z),z) $.  
Since, in the special case $n=2$, $n$-distributivity is the same as
 $n$-directed distributivity,   we get that a variety with a Pixley term
is $2$-directed distributive. Hence one could hope that,
for arbitrary $n$ and given 
terms witnessing $n$-distributivity, setting  
\begin{equation}\label{set}     
t_i^\diamondsuit (x,y,z)=t_i(t_i (x,z,z)t_i(x,y,z),t_i (x,x,z)) ,
  \end{equation}
 for even $i$, could produce
a sequence of directed J{\'o}nsson terms.
Of course, this na\"\i ve  expectation, as it stands, is   wrong,
since also the the remaining terms should be modified,
and some required identity might be missing. 

The above procedure works in the case $n=4$, by suitably modifying
the terms $t_1$ and $t_3$, as exemplified in the
proof of \cite[Proposition 5.4]{daysh}.     
For larger $n$, we do not know a simple way to obtain the result.
The best we are able to do, so far, is  to perform the substitution \eqref{set}
on one even index at a time, each time suitably modifying the
remaining terms.  Compare the proof of 
Claim \ref{cl2} below, but note that
we will work with binary terms, as we
are going to explain in the next paragraph.
Already in the special case $n=6$ our proof becomes
rather involved, see Example \ref{espl}. 
 
On the other hand, as just mentioned, we can indeed simplify the arguments
a little by considering binary terms instead of ternary terms,
using the well-known fact that the J{\'o}nsson and the directed
identities can be written in a way involving just two variables.
Thus we are lead to an accurate analysis of binary terms in $\mathcal V$ 
and of some ways of combining them, extending notions and ideas from
\cite{FV,adjt}. The main tool here is Proposition \ref{lem}:
clause (v)(d) there corresponds to the substitution \eqref{set},
while clause (v)(b) helps to maintain the remaining identities.  
However, the most intricate aspect of the proof of the main result is the need
of additional identities connecting ``distant'' terms. These
identities are needed
in order to obtain the desired directed identities
 and correspond to dashed
arrows in our notation; this means that the connecting terms
are not required to satisfy $ x  \approx t(x,y,x)$.
The whole of Proposition \ref{lem}
is devoted to obtain such connections
and the main inductive proof is presented in 
Claim \ref{clu}. 

Our methods become slightly easier and
our results are somewhat  more general
in the case of locally finite varieties,
or just under the assumption that
 every algebra in $\mathcal V$ generated by $2$ elements
is finite. In this case the connections tying distant terms
are easier to come by and we succeed in dealing with more general
configurations associated to  paths, as
 first studied in \cite{KV} in a somewhat 
broader situation. 
It is an open problem whether these results hold with no finiteness assumption.

\section{Preliminaries} \labbel{prel} 

We mainly use the notation from \cite{KV}.
Most of the notions appeared also in \cite{adjt}, sometimes
in different terminology. We assume that the reader is familiar with the basic
notions of universal algebra, as can be found, e.~g., in \cite{MMT}. 
Familiarity with \cite{FV,adjt,KV} would make the paper easier to read. 
Some useful comments can be found in \cite{daysh}.

\begin{convention} \labbel{conv}   
   Throughout this note, we fix a
variety $\mathcal V$ all whose
operations are idempotent.  
For the sake of brevity, we will simply say that
$\mathcal V$ is \emph{idempotent}.
We  work in the free algebra $\mathbf F_2 ( \mathcal V)$ generated
in $\mathcal V$ 
by two elements $ \bar x$ and $\bar z$.
Since $\mathcal V$ is fixed, we will frequently write
$\mathbf F_2$ in place of $\mathbf F_2 ( \mathcal V)$.
Elements of  $ F_2$
are denoted by $s, s_1, s', r, \dots$\ 
Since $\mathbf F_2$ is generated by 
$ \bar x$ and $\bar z$, to every element 
$ s \in F_2$ there is associated some term
$ \hat s$ depending on the variables $x$ and  $z$  
such that $s$ is the interpretation of 
$ \hat s$ under the assignment $x \mapsto \bar x$
and  $z \mapsto \bar z$.
Thus $s= \bar s( \bar x, \bar z)$, where 
$\bar s$ is a shorthand for 
$ \hat s ^{ \mathbf F_2}$, namely,
 $\bar s( \bar x, \bar z)$
denotes the interpretation of $ \hat s$
under the above assignment.
Notice that $ \bar{x}$ is the interpretation of
the variable $x$, hence the notation is consistent.

The term $ \hat s$ is not unique;
however, since  $\mathbf F_2$ is free,
any other term satisfying the condition is interpreted
in the same way inside $\mathcal V$.
 \end{convention}

The Maltsev conditions we consider are defined 
using ternary terms, but in all proofs we succeeded
in working just with binary terms
(abstractly, this is due to the fact that the Maltsev 
conditions we deal with can be expressed using only two variables,
hence, say, a variety is congruence distributive if and only if
the free algebra $ \mathbf F_2$ generates a congruence distributive 
variety. See \cite{FV} for further elaboration on this).
The main connection among binary and ternary terms
is given by the following definition, rephrasing notions
from \cite{adjt,KV}.

\begin{definition} \labbel{defs}
In this note $ \approx  $ is used in equations with the
intended meaning that the equations are always satisfied in $\mathcal V$.
Also the notions we are going to define depend on $\mathcal V$,
but we will not explicitly indicate the dependence, since $\mathcal V$ 
will be kept fixed.

 We will consider directed graphs
whose vertexes are  elements of $ F_2$
and whose edges are labeled either as solid or dashed.
If $ s, r \in  F_2$,
there is an edge from $s$ to $r$
if and only if  there is a ternary $\mathcal V$-term $t$ 
such that the equations $ \hat s(x,z) \approx  t(x,x,z)$
and $t(x,z,z) \approx  \hat r(x,z)$ are valid in $\mathcal V$.
We shall denote this as 
$ s \dashrightarrow r$ or  
$ r \dashleftarrow s$.
In particular, $ s \dashleftarrow r$
means that there is a term $t$ such that  
$\hat s(x,z) \approx  t(x,z,z)$
and $t(x,x,z) \approx  \hat r(x,z)$.
Intuitively, an arrow means that the variable $x$ is moved to  $z$
in the middle argument of $t$ and in the same direction.

If furthermore $t$ can be chosen in such a way that
$x \approx  t(x,y,x)$,
then  the edge from $s$ to $r$ is solid, denoted by
$ s \rightarrow r$ or  
$ r \leftarrow s$.
It is convenient to have multiple edges, so that if, say, 
$ s \rightarrow r$, then also 
$ s \dashrightarrow r$. As custom,
a notation like  $ s \rightarrow s_1 \dashleftarrow s_2 \leftarrow s_3 
\dashrightarrow s_4$
means that $ s \rightarrow s_1 $,
$s_1 \dashleftarrow s_2$, $s_2 \leftarrow s_3$ and
$s_3 \dashrightarrow s_4$  at the same time.
 \end{definition}

\begin{remark} \labbel{notadj}
As we mentioned, we use a notation quite similar to 
\cite{KV}. In \cite{adjt}, instead, a different notation is used:
the relations denoted by $F$  and $E$ in   
\cite[p. 209]{adjt} correspond to $ \dashrightarrow $
and $ \rightarrow $ in the present notation.  
In \cite{adjt} arrows are used to denote transitive closures
of the relations $F$ and $E$. Here we have no use
for transitive closure, since we want to deal with the exact length
of paths.
 \end{remark}   

\begin{example} \labbel{ex}
Many Maltsev conditions can be represented by 
undirected paths from $ \bar{x}$ to $ \bar{z}$
in $\mathbf F _2$. 

(a) For example,  for $n$ even, 
$\mathcal V$ is $n$-distributive if and only if 
there are
$s_2, s_3, \dots, \allowbreak s_{n-1} \in F_2 $ such that    
$ \bar x \rightarrow s_2 \leftarrow s_3
\rightarrow s_4 \leftarrow s_5 \rightarrow 
\dots \leftarrow  s_{n-1} \rightarrow \bar{z} $. 
A path representing $n$-distributivity for $n$ odd is similar,
except that we have  $ \dots \rightarrow  s_{n-1} \leftarrow \bar{z} $
 on the right side.

Indeed, the condition is equivalent to the
existence of terms $t_1, \dots, t_{n-1}$ such that 
$x \approx  t_i(x,y,x)$,
for every $i < n$, 
$x \approx  t_1(x,x,z)$,
 $t_1(x,z,z) \approx  \hat s_2(x,z) 
\approx  t_2(x,z,z) $, $t_2(x,x,z) \approx  \hat s_3(x,z) 
\approx  t_3(x,x,z) $ \dots \ Notice that if   there are terms
$t_1, \dots, t_{n-1}$ satisfying the conditions for $n$-distributivity,
then $s_2, s_3, \dots, s_{n-1} $ can be expressed in function of 
the $t_i$s. 

(b) Similarly, directed distributivity is equivalent to 
the realizability of
$ \bar x \rightarrow s_2 \rightarrow s_3
\rightarrow s_4 \rightarrow \dots  \rightarrow \bar{z} $. 

Paths like $ \bar x \rightarrow s_2 \rightarrow s_3
\rightarrow s_4 \rightarrow \dots \bar z$
 or (undirected) paths like $ \bar x \rightarrow s_2 \leftarrow s_3
\rightarrow s_4 \leftarrow \dots \bar z$  
will be called \emph{pattern paths} and a variety $\mathcal V$ 
is said to \emph{realize} the pattern path  if $\mathbf F _2 ( \mathcal V) $
has  elements $s_2, s_3, \dots $ such that the relations represented by the 
path are satisfied. Equivalently, $\mathcal V$ has binary terms 
 $ \hat s_2, \hat s_3, \dots, \hat s_{n-1} $ and ternary
terms $t_1, \dots, t_{n-1}$ such that the equations given by Definition \ref{defs}
hold through $\mathcal V$. We will frequently consider
 additional edges between the vertexes of the above paths.

(c) If we exchange the conditions for 
$i$ odd and  $i$ even in the definition 
of J{\'o}nsson terms, we get a condition
which is frequently called the \emph{alvin}  condition.
See \cite{FV,daysh} for a discussion. 
The alvin condition corresponds to the realizability of    
$ \bar x \leftarrow s_2 \rightarrow s_3 \leftarrow s_4
\rightarrow s_5 \leftarrow s_6 \rightarrow s_7 \leftarrow 
\dots \bar{z} $.

So far, we have presented conditions involving
only solid edges. Any condition represented, as above, by
an undirected path from $\bar x$ to $\bar z$
implies  congruence distributivity \cite{KV,daysh}. 
We now deal with weaker conditions involving
dashed edges and which are equivalent to congruence modularity.

(d) If in the alvin condition above we do not ask
for the equation $x \approx t_1(x,y,x)$
to be satisfied, we get a 
 sequence of \emph{Gumm terms}.
In detail, Gumm terms are terms satisfying
the equations in \eqref{j3} for $i$ odd,
the   equations in \eqref{j4} for $i$ even,
the equations \eqref{j2} and  \eqref{j5}
and the equations in \eqref{j1} for $1 < i$.   
The existence of Gumm terms corresponds to the
realizability of 
$ \bar x \dashleftarrow s_2 \rightarrow s_3 \leftarrow s_4
\rightarrow s_5 \leftarrow s_6 \rightarrow s_7 \leftarrow 
\dots \bar{z} $. 
 
The definition of Gumm terms is not uniform in the literature, 
see \cite[Remark 7.2]{daysh} for a discussion. 

(e) For $n$ even,  $n \geq 4$, 
\emph{defective Gumm terms},
introduced in \cite{DKS} in different terminology,
correspond to a path of the form
$ \bar x \dashleftarrow s_2 \rightarrow s_3 \leftarrow s_4
\rightarrow s_5 \leftarrow s_6 \rightarrow s_7 \leftarrow 
\dots \rightarrow s_{n-5} \leftarrow s_{n-4}
\rightarrow s_{n-3} \leftarrow s_{n-2}\rightarrow s_{n-1} \dashleftarrow  \bar{z} $.
See \cite{daysh} for further details. 

(f)
Finally, \emph{directed Gumm terms} \cite{adjt} 
 correspond to the
realizability of 
$ \bar x \rightarrow s_2 \rightarrow s_3 \rightarrow  s_4
\rightarrow \dots \rightarrow  s_{n-3} \rightarrow  s_{n-2} \rightarrow s_{n-1} \dashleftarrow  
\bar{z} $. 

So far, we have dealt with conditions implying
congruence distributivity or at least congruence modularity.
It is not the case that every condition involving
some path from $\bar x$ to $\bar z$ does imply
congruence modularity. In fact, if a dashed 
right arrow is present, the resulting condition is
trivially satisfied by every variety \cite{KV}. 
Intermediate situations might occur.

(g) As a condition which we will use only marginally here,
$\mathcal V$ is $n$-permutable if and only if 
$ \bar x \dashleftarrow  s_2 \dashleftarrow  s_3 \dashleftarrow 
\dots \dashleftarrow s_{n-1}\dashleftarrow \bar{z} $ 
can be realized.
Recall that, for $n \geq 4$, $n$-permutability does 
not imply congruence modularity.

(h)   In examples (d) - (f) above we have 
taken conditions implying congruence distributivity
and we have changed some solid edges to dashed,
getting  conditions implying congruence modularity.
The procedure applies only when left-oriented edges on the two
borders are changed. See \cite[Section 8]{daysh}. 

For example, Polin variety 
realizes  
$ \bar x \leftarrow  s_2 \dashleftarrow  s_3 \leftarrow \bar z$ and 
$ \bar x \rightarrow  s_2 \dashleftarrow  s_3 \rightarrow \bar z$
\cite[Remark 10.11]{daysh}, but
Polin variety is not congruence modular.

The correspondences
described in the present example can be further refined, but we shall not
need this here. See \cite{FV,KV,daysh} for more details and for further
Maltsev conditions expressible in the above fashion.
Notice that here we have shifted the indices of the terms
$s_i$, in comparison with \cite[Section 3.2]{KV}.  
 \end{example}

\section{A useful proposition} \labbel{usef} 

\begin{remark} \labbel{prelem}    
The key to our proofs is to nest the terms giving
the relevant conditions. While the terms are ternary,
it will be almost everywhere sufficient to deal 
with binary terms.
In other words, we need to  combine terms associated to elements of $F_2$.
Recalling Convention \ref{conv},
if $s, s_1, s_2 \in F_2$,
then $r= \bar s(s_1,s_2)$ 
is the element of $F_2$ obtained by interpreting the term $\hat s$
under the assignment $x \mapsto s_1$,
$z \mapsto s_2$.   Thus a term $ \hat r$
corresponding to $r$ is given by 
$\hat r(x,y) = \hat s (\hat s_1(x,y), \hat s_2(x,y))$.   
Some subtle properties of the above way of generating elements
of $F_2$ are listed in the next proposition. 
In particular, item (v)(d) below will allow us to 
reverse some arrows, and item (v)(k) will give us the
possibility of obtaining relations involving new terms
not appearing in the assumptions.
\end{remark}

 For every $ s \in F_2$ and $p \geq 0$, we define $s ^{(p]} $
inductively by  $s ^{(0]}=s $ and 
$s ^{(p+1]} =\bar s( \bar x,s ^{(p]})  $.

\begin{proposition} \labbel{lem}
Assume that $\mathcal V$ is a variety all whose
operations are idempotent; let $s, s_1, s'_1, \dots, r, r_1, \dotsc \in  F_2$
and assume
the above notation and definitions. 
Then the following statements hold.
  \begin{enumerate}[(i)]   
\item
$s \rightarrow s$, for every $s$, that is, $ \rightarrow $ is reflexive.
 \item 
$ \bar  x \dashrightarrow \bar  z$. More generally,
$\bar  x \dashrightarrow s$ and $s \dashrightarrow \bar  z$, for every $s$.
\item 
The binary relations $ \rightarrow $, $ \leftarrow $,
$ \dashrightarrow $ and $ \dashleftarrow $  are compatible in 
 $\mathbf F_2$.   
\item 
More generally, assume that $t$ is an $m$-ary  term,
$I \subseteq \{ 1, \dots , m \} $
and $t(x_1, \dots, \allowbreak x_m) \approx  x$
 is an identity valid in $\mathcal V$ 
when $x_i=x$, for every $i \in I$.
If $s_i \rightarrow s'_i$, for every $i \in I$, and
 $s_i \dashrightarrow s'_i$ for the remaining indices, $i \leq m$,  
then
$t(s_1, \dots, s_m) \rightarrow t(s'_1, \dots, s'_m)$. 

\item 
  \begin{enumerate}[(a)]
\item
 If  $ s \dashrightarrow r$, $ s' \dashrightarrow r'$,
$ s' \dashrightarrow r''$
 and $ s'' \dashrightarrow r''$,
then $\bar s(s',s'') \dashrightarrow \bar r(r',r'')$.
(At first glance, the condition $ s' \dashrightarrow r''$ might appear spurious,
but it is necessary, see Example \ref{ex2}(d) below.)
    
\item 
 If $ s \rightarrow r$, $ s' \rightarrow r'$,
$ s' \dashrightarrow r''$ (notice that a dashed arrow 
is sufficient here)
 and $ s'' \rightarrow r''$,
then $ \bar s(s',s'') \rightarrow \bar r(r',r'')$.

\item
 If $ s \dashleftarrow r$ (notice the reversed arrow), $ s' \dashrightarrow r'$,
$ s'' \dashrightarrow r'$
 and $ s'' \dashrightarrow r''$,
then $ \bar s(s',s'') \dashrightarrow \bar r(r',r'')$.

\item
 If $ s \leftarrow r$, $ s' \rightarrow r'$,
$ s'' \dashrightarrow r'$
 and $ s'' \rightarrow r''$,
then $ \bar s(s',s'') \rightarrow \bar r(r',r'')$.
  
\item 
If $ s \dashrightarrow r$ and $ s' \dashrightarrow r'$,
then $ \bar s(s',\bar z) \dashrightarrow \bar r(r', \bar z)$ and
$ \bar s(\bar x, s') \dashrightarrow \bar r(\bar x, r')$. 

\item 
If $ s \rightarrow r$ and $ s' \rightarrow r'$,
then $ \bar s(s',\bar z) \rightarrow \bar r(r', \bar z)$
and $ \bar s(\bar x, s') \rightarrow \bar r(\bar x, r')$. 

\item
In particular, by induction, if 
$ s \dashrightarrow r$, then 
$ s ^{(p]} \dashrightarrow r ^{(p]}$,
for every $p$. 
If 
$ s \rightarrow r$, then 
$ s ^{(p]} \rightarrow r ^{(p]}$,
for every $p$.

\item 
If $ s \rightarrow \bar z$
and $ s'' \rightarrow \bar z$, then $ \bar s(s', s'') \rightarrow \bar  z$, for every $s' \in F_2$.

If $ s \leftarrow \bar z$, $ s'' \leftarrow \bar z$
and $ s' \dashrightarrow s''$, then 
 $ \bar s(s',s'') \leftarrow \bar  z$.

If $ s \dashleftarrow \bar z$, $ s'' \dashleftarrow \bar z$
and $ s' \dashrightarrow s''$, then 
 $ \bar s(s',s'') \dashleftarrow \bar  z$.

\item[(j)] 
If $ \bar x \rightarrow s$
and  $ \bar x \rightarrow s''$,
then $ \bar x \rightarrow \bar s(s'', s') $, for every
$s' \in F_2$.  

If $ \bar x \leftarrow s$, $ \bar x \leftarrow s''$
and $ s'' \dashrightarrow s'$, then 
 $ \bar x \leftarrow \bar s(s'', s') $.

\item[(k)]
If $ s \dashrightarrow s' $ and $ s \dashrightarrow s''$,
then $ s \dashrightarrow \bar r(s', s'')$, for every
$r \in F_2$. 

If $ s' \dashrightarrow s $ and $ s'' \dashrightarrow s$,
then $ \bar r(s', s'') \dashrightarrow s$, for every
$r \in F_2$. 
\end{enumerate}

\item
All the statements in (iv), (v)(a)-(g) hold true if we reverse 
simultaneously the arrows everywhere.
  \end{enumerate} 
 \end{proposition}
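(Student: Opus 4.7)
The plan is to deduce (vi) from the already-established items by invoking a natural symmetry of $\mathbf F_2$: the automorphism $\sigma\colon\mathbf F_2\to\mathbf F_2$ that interchanges the two free generators $\bar x$ and $\bar z$. First I would verify the equivalences
\[
 s\dashrightarrow r\iff\sigma(s)\dashleftarrow\sigma(r),\qquad
 s\rightarrow r\iff\sigma(s)\leftarrow\sigma(r),
\]
which follow by replacing any witnessing ternary term $t(x,y,z)$ by $t(z,y,x)$: the two defining identities $t(x,x,z)\approx\hat s(x,z)$ and $t(x,z,z)\approx\hat r(x,z)$ swap into the witnessing identities for $\sigma(r)\dashrightarrow\sigma(s)$, and the Pixley-like identity $t(x,y,x)\approx x$ is preserved because it is symmetric in its first and third arguments.

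Next, I would substitute $s\mapsto\sigma(s)$, $r\mapsto\sigma(r)$, and so on in each statement of (iv) and (v)(a)--(g). Since $\sigma$ is a homomorphism, a direct computation using $\hat{\sigma(s)}(x,z)=\hat s(z,x)$ yields $\bar{\sigma(s)}(\sigma(s'),\sigma(s''))=\sigma(\bar s(s'',s'))$, so the derived operation picks up a swap of its two inner arguments. After this substitution, the equivalences above convert every forward arrow into its reverse, and the resulting reversed statement is obtained modulo a harmless relabeling of the universally quantified inner variables $s'\leftrightarrow s''$ and $r'\leftrightarrow r''$ which absorbs the inner-slot swap. For items (v)(e)--(g) this bookkeeping is immediate because the inner arguments are the distinguished generators $\bar x,\bar z$, which $\sigma$ just interchanges.

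The main obstacle I anticipate is precisely the bookkeeping for the asymmetric cross hypotheses in (v)(a)--(d): under $\sigma$ followed by the slot relabeling, a cross such as $s'\dashrightarrow r''$ migrates to the opposite inner slot in the reversed version. The delicate step is to check that the pattern of hypotheses produced matches exactly what (vi) asserts for each item, with no missing or spurious arrow. Once this pattern-matching is verified, no new term construction is needed: the compositions used in the forward proofs transfer verbatim, with every witnessing term now read as a witness of the corresponding reversed arrow.
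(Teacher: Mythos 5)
Your proposal addresses only item (vi), taking (i)--(v) as ``already-established items.'' But (i)--(v) are where all the substance of the Proposition lives: the explicit ternary witnesses for (i) and (ii), the compatibility argument (iii) via the nested term $t^\diamondsuit(x,y,z)=t(t_1(x,y,z),t_2(x,y,z),\dots)$, the use of idempotence and of the identity $t(x,y,x)\approx x$ in (iv), and the case analysis in (v)(a)--(k) that substitutes the middle variable in two different ways depending on the direction of the outer arrow. None of this is proved or even sketched, so as a proof of the Proposition the attempt has a major gap regardless of the merits of the argument for (vi).

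Concerning (vi) itself, your symmetry $\sigma$ swapping $\bar x$ and $\bar z$ does satisfy the equivalences you state (replace $t(x,y,z)$ by $t(z,y,x)$), and it would eventually work; but it is a detour. Since $\leftarrow$ and $\dashleftarrow$ are \emph{by definition} the converses of $\rightarrow$ and $\dashrightarrow$, the reversed statements are literally the original ones with the letters $s,s',s''$ and $r,r',r''$ interchanged --- the paper's proof of (vi) is exactly this one-line relabeling, with no automorphism needed. More importantly, the ``delicate pattern-matching'' you defer is precisely the point of (vi) and cannot be waved away: the reversed form of (v)(b) must carry the cross hypothesis $s''\dashleftarrow r'$ (equivalently $r'\dashrightarrow s''$), not $s'\dashleftarrow r''$; with the wrong diagonal the statement is false (take $r=\bar x$, $s=\bar z$ in a $2$-distributive variety without a Pixley term, as in Example \ref{ex2}(d), and the ``reversed'' conclusion would force $\bar z\rightarrow\bar x$). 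Both the relabeling and your $\sigma$-computation do land on the correct diagonal, but you explicitly stop before verifying this, so the one nontrivial check in your chosen route is left undone.
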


\begin{proof} 
(i) Take $t(x,y,z)= \hat s(x,z)$. 
The edge is solid, since all terms of $\mathcal V$ are idempotent, 
because we
assume that all the operations of $\mathcal V$ are idempotent.

(ii) To prove the first statement, use the projection 
onto the second component $t(x,y,z)=y$. 
To prove $\bar  x \dashrightarrow s$, take $t(x,y,z)= \hat s(x,y)$,
again using idempotence. 
To prove $s \dashrightarrow \bar  z$, take $t(x,y,z)= \hat s(y,z)$.

(iii) Let $s_1 \rightarrow s'_1$, 
$s_2 \rightarrow s'_2$, \dots  \ be witnessed by 
$ \hat s_1(x,z) \approx  t_1(x,x,z)$,
$t_1(x,z,z) \approx  \hat  s'_1(x,z)$,
$\hat  s_2(x,z) \approx  t_2(x,x,z)$,
$t_2(x,z,z) \approx  \hat  s'_2(x,z)$, \dots\ 
If $t$ is a $\mathcal V$-term,
then $\bar t(s_1, s_2, \dots ) \rightarrow \bar t(s'_1, s'_2, \dots )$   
is witnessed by the term 
$t^\diamondsuit (x,y,z)=t(t_1(x,y,z), \allowbreak t_2(x,y,z), \dots)$. 
As in Remark \ref{prelem},  
$ \bar{t}(s_1, s_2, \dots )$ denotes 
 the interpretation of the term $t$ under the assignment
$x_i \mapsto s_i$, where the $x_i$'s  are the variables occurring in $t$.    
The arrow is solid since $t$  is idempotent.

The case of $ \dashrightarrow $  is similar.
The relations $ \leftarrow  $ and $ \dashleftarrow $
are the converses of $ \rightarrow $ and $ \dashrightarrow $,
hence the conclusion follows from the above arguments.

(iv) Following the proof of  (iii), we have $t_i(x,y,x) \approx  x$,
for every $i \in I$, 
since the arrow in $ s_i \rightarrow s'_i$ is solid. 
Hence   $t^\diamondsuit(x,y,x) \approx  x$, by the assumption on
$t$. 

(v)(a)
By the assumption $s \dashrightarrow r$,
$ \hat s(x,z) \approx  t(x,x,z)$ and 
$t(x,z,z) \approx  \hat r(x,z)$
for some term $t$.
Then  $\bar s(s',s'') = \bar t(s',s',s'') 
\dashrightarrow \bar t(r',r'', \allowbreak  r'') =  \bar r(r',r'')$, by (iii).
 
(v)(b) is proved in the same way, using  (iv).

(v)(c) is similar to (a). Here the assumption is
$ \hat s(x,z) \approx  t(x,z,z)$ and 
$t(x,x,z) \approx  \hat r(x,z)$
for some term $t$.
Then  $\bar s(s',s'') = \bar t(s',s'',s'') 
\dashrightarrow \bar t(r',r', \allowbreak r'')
 \allowbreak  =  \bar r(r',r'')$, by (iii).

(v)(d) is proved as (v)(c), using again (iv).

Clauses (e)(f) are special cases of (a)(b), by (i) and (ii).
Then (g) follows by induction.

(v)(h) The element $\bar z \in F_2$ corresponds to the term $ \hat p_2$,
the projection onto the second component.  
If we write $s \rightarrow \bar z$ as $s \rightarrow p_2$,
 then
 $ \bar s(s', s'') \rightarrow \bar p_2(s', \bar z) = \bar  z$, by
(i), (ii) and (v)(b).
To prove the second line, $s \leftarrow \bar z$ means
$p_2 \rightarrow s$, thus $\bar z = \bar p_2(s', \bar z)  \rightarrow 
 \bar s(s', s'') $.   
Item (v)(j) is proved in a dual way.

(v)(k) Since $r$ is idempotent,
$s= \bar r(s,s ) \dashrightarrow \bar r(s', s'')$,
by (iii). The second statement is proved in a similar way.

(vi) can be proved by repeating the above arguments. However,
there is no need of doing this. Just recall that, say, $s \dashrightarrow r$
is the same as  $r \dashleftarrow s$. Hence (vi) follows from 
(iv) - (v) just by relabeling the elements in the formulas.  
\end{proof}

Every argument in the proof of Proposition \ref{lem}
can be translated expressing it in function of the relevant
ternary terms. We will exemplify this aspect in a simple case in
Example \ref{ex2} and in a more elaborate situation in
Example \ref{espl} below.

In this note we will always work with a variety all whose operations are
idempotent. However, interesting parts of Proposition \ref{lem}
hold with much weaker  assumptions.

\begin{lemma} \labbel{lemnoidemp}
Under the above conventions and with no idempotency assumption
on $\mathcal V$, 
 the following statements hold.
  \begin{enumerate}[(i)]   
\item
$s \dashrightarrow s$, for every $s$; and $s \rightarrow s$, for every
 $s$ such that $\hat s$ is an idempotent term. 
 \item 
$ \bar x \dashrightarrow \bar  z$. Moreover,
$\bar  x \dashrightarrow s$ and $s \dashrightarrow \bar  z$, for every $s$
such that $\hat s$ is an idempotent term. 
\item 
The binary relations 
$ \dashrightarrow $ and $ \dashleftarrow $  are compatible in 
 $\mathbf F_2$.   The binary relations $ \rightarrow $ and $ \leftarrow $
are respected by every idempotent term of $\mathcal V$.

\item 
Subitems (v)(a), (v)(c) and  (v)(e) in Proposition \ref{lem} hold. 
All the remaining subitems in (v)  hold under the further 
assumption that the terms  $ \hat s$, $ \hat s'$ and $ \hat r$
are idempotent (but notice that if $s \dashrightarrow r$, then 
$\hat s$ is idempotent if and only if $\hat r$  is idempotent.)
 \end{enumerate} 
 \end{lemma}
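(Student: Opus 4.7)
The plan is to revisit the proof of Proposition~\ref{lem} step by step, auditing where the blanket idempotency of $\mathcal V$ is actually used and replacing each such invocation either by the weaker dashed conclusion, or by the strictly local hypothesis that the particular term $\hat s$, $\hat{s'}$, or $\hat r$ at hand is idempotent. Since the original proofs exhibit explicit ternary witnesses, this is largely a bookkeeping exercise.

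For (i) I would keep the witness $t(x,y,z) = \hat s(x,z)$: the identities $t(x,x,z) \approx \hat s(x,z) \approx t(x,z,z)$ hold in any $\mathcal V$, so $s \dashrightarrow s$ is unconditional, while the solid version $s \rightarrow s$ reduces to $\hat s(x,x) \approx x$, i.e.\ idempotency of $\hat s$. For (ii), the second projection $t(x,y,z) = y$ witnesses $\bar x \dashrightarrow \bar z$ because $\hat{\bar x}$ and $\hat{\bar z}$ are projections and hence automatically idempotent; the witnesses $t(x,y,z) = \hat s(x,y)$ and $t(x,y,z) = \hat s(y,z)$ for $\bar x \dashrightarrow s$ and $s \dashrightarrow \bar z$ match the projection endpoints precisely when $\hat s$ is idempotent. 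For (iii) the construction $t^{\diamondsuit}(x,y,z) = t(t_1(x,y,z), \ldots, t_n(x,y,z))$ of Proposition~\ref{lem}(iii) respects the dashed identities for any outer $t$, idempotent or not; to make the resulting arrow solid one needs $t^{\diamondsuit}(x,y,x) \approx x$, which from $t_i(x,y,x) \approx x$ can only come via $t$ itself being idempotent, matching the stated restriction.

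For (iv) I would walk through Proposition~\ref{lem}(v) subitem by subitem. Clauses (v)(a) and (v)(c) used only compatibility of $\dashrightarrow$ together with the algebraic identities $\bar s(s', s'') = \bar t(s', s', s'')$ and $\bar r(r', r'') = \bar t(r', r'', r'')$ coming from $s \dashrightarrow r$, so they pass through unchanged. Clauses (v)(b) and (v)(d) are the solid analogues and require $\hat s$ and $\hat r$ (and the inner $\hat{s'}$) to be idempotent exactly so that the outer term of $t^{\diamondsuit}$ is idempotent. Clause (v)(e) I would reduce to (v)(a) by specialising $s'' = r'' = \bar z$, invoking that $\hat{\bar z}$ is a projection and hence idempotent so that the auxiliary arrows $\bar z \dashrightarrow \bar z$ and $s' \dashrightarrow \bar z$ from parts (i) and (ii) of the present lemma become available under the stipulated local idempotency hypotheses. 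Clauses (v)(h), (v)(j), (v)(k) reduce via the projection/self-loop rewrites $\bar z = \bar p_2(s', \bar z)$, $\bar x = \bar p_1(\bar x, s')$, $s = \bar r(s,s)$, and each rewrite requires idempotency of precisely the term named in the hypothesis. The parenthetical observation that $\hat s$ is idempotent iff $\hat r$ is when $s \dashrightarrow r$ follows by setting $x = z$ in the witness $t$, since both $\hat s(x,x)$ and $\hat r(x,x)$ collapse to $t(x,x,x)$.

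The main obstacle will be (v)(e). In Proposition~\ref{lem} the substitution $s'' \mapsto \bar z$ into (v)(a) was cost-free because Proposition~\ref{lem}(ii) supplied $s' \dashrightarrow \bar z$ for every $s'$; without global idempotency that input has to be produced from the specific data at hand, so careful attention must be paid to which of the idempotencies of $\hat s$, $\hat{s'}$, $\hat r$ the first and the second half of the clause really need, and if the reduction through (v)(a) does not suffice one has to fall back on a direct construction of the ternary witness for $\bar s(s', \bar z) \dashrightarrow \bar r(r', \bar z)$ by nesting $t$ and $t'$, parallel to the $t^{\diamondsuit}$ of (iii). The remaining subitems are then only a matter of verifying that the idempotency listed in each hypothesis is exactly what is consumed by the corresponding rewrite or projection step.
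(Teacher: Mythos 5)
The paper prints no proof of Lemma \ref{lemnoidemp} at all; the intended argument is precisely the audit you carry out over the proof of Proposition \ref{lem}, and your accounting is accurate for items (i)--(iii), for (v)(a) and (v)(c) (whose proofs use only compatibility of $\dashrightarrow$ together with the identities defining $s \dashrightarrow r$, hence need no idempotency), and for the conditional subitems (h), (j), (k). One small imprecision: in (v)(b) and (v)(d) what makes the composite arrow solid is not idempotency of the outer term but the identity $t(x,y,x)\approx x$ supplied by the solid hypothesis $s\rightarrow r$ via Proposition \ref{lem}(iv); incidentally that identity already forces $\hat s$ and $\hat r$ to be idempotent, so the hypothesis on them is automatic there.

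The genuine gap is the one you yourself flag and then leave open: clause (v)(e). The reduction to (v)(a) needs $s'\dashrightarrow\bar z$ (respectively $\bar x\dashrightarrow r'$ for the second half), which by item (ii) of the lemma is available only when $\hat s'$ is idempotent, and the ``direct construction of the ternary witness by nesting $t$ and $t'$'' that you offer as a fallback does not exist: clause (v)(e) is simply false without that hypothesis. In the variety of all groupoids take $t(x,y,z)=y\cdot z$, so that $\hat s=x\cdot z$, $\hat r=z\cdot z$ and $s\dashrightarrow r$, and $t'(x,y,z)=x\cdot y$, so that $\hat s'=x\cdot x$, $\hat r'=x\cdot z$ and $s'\dashrightarrow r'$. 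Then $\bar s(s',\bar z)$ is represented by $(x\cdot x)\cdot z$ and $\bar r(r',\bar z)$ by $z\cdot z$, and no ternary term $u$ satisfies $u(x,x,z)\approx(x\cdot x)\cdot z$ and $u(x,z,z)\approx z\cdot z$, because in the absolutely free groupoid every substitution instance of $u$ has the same tree shape (in particular the same number of leaves) as $u$. So your instinct that one must determine ``which of the idempotencies the clause really needs'' is the correct one, and the answer is that (v)(e) must be moved to the conditional group, requiring $\hat s'$ (equivalently, by the parenthetical remark, $\hat r'$) to be idempotent. This correction is harmless for the rest of the paper, where every binary term fed into (v)(e) is idempotent, but your proof cannot close the clause as it is stated.
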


\begin{example} \labbel{ex2}  
Many items in Proposition \ref{lem} furnish a compact way
for describing widely used techniques.
The following examples are basic; more involved applications
will be provided in the following sections.

(a) A  \emph{Pixley}  term is a ternary term $t$ such that
$ x \approx t(x,z,z) \approx t(x,y,x)$ and $t(x,x,z) \approx z$.  
This is equivalent to 
$ \bar x \leftarrow \bar z$, according to
Definition \ref{defs}.

As we mentioned in the proof of Proposition \ref{lem}(v)(h),
 $\bar z $ corresponds to the 
the projection  $ \hat p_2$ onto the second component;
similarly, $\bar x $ corresponds to the 
the projection  $ \hat p_1$ onto the first component, 
so that $ \bar x \leftarrow \bar z$ can be written as
$ p_1 \leftarrow p_2$, equivalently, $ p_2 \rightarrow p_1$.

By Proposition \ref{lem}(i), (v)(d) we get
$ \bar x = \bar p_1(\bar  x, \bar  z) \rightarrow 
\bar p_2(\bar  x, \bar  z) = \bar z$, since
$\bar  z \rightarrow  \bar  x$, hence 
$\bar  z \dashrightarrow  \bar  x $
(what is relevant in this example is that the $\bar z$ 
in $\bar p_1(\bar  x, \bar  z)$ is connected by
$ \dashrightarrow $ to the $\bar x$ in 
$\bar p_2(\bar  x, \bar  z)$). 
 
Thus
$\bar  x \rightarrow \bar z$,  
that is, $2$-distributivity. 

The above argument
 provides a proof of the fact that a variety with a Pixley term
is $2$-distributive, that is, has a majority term.
Of course, a direct proof using the ternary Pixley term
is easy; see the introduction. However,  arguments similar to (a) will be very helpful
when dealing with more involved situations in which
certain conclusions are much more difficult to obtain
dealing directly with ternary terms. See the last sentence in Example \ref{espl}.

(a1)  In fact, we now check that, going through the proof
given by Proposition \ref{lem}, we obtain exactly the classical argument. 

If $t$ is a Pixley term, 
then $ \bar p_1(\bar  x, \bar  z) = \bar x = \bar  t( \bar x, \bar z, \bar z)$
and  
$\bar p_2(\bar  x, \bar  z) = \bar z = \bar t( \bar x, \bar x, \bar z)$.
We need to check that 
 $ \bar t( \bar x, \bar z, \bar z) \rightarrow \bar t( \bar x, \bar x, \bar z)$.
This involves the arguments $\bar z$ and  $\bar x$ in the middle.  
The Pixley term $t$ itself witnesses  $\bar z \rightarrow  \bar x$,
that is, $\bar x \leftarrow  \bar z$,
thus the proof of Proposition \ref{lem}(v)(d) gives
$t(x,t(x,y,z),z)$ as a J{\'o}nsson term. 

(b) On the other hand, a variety with a Pixley term 
is congruence permutable (= $2$-permutable),
since $ \bar x \leftarrow \bar z$ implies
$ \bar x \dashleftarrow \bar z$. Compare  
Example \ref{ex}(g). 

(c) As well-known, a variety $\mathcal V$  has a Pixley term
if and only if $\mathcal V$ is both congruence permutable and
$2$-distributive.
A proof for necessity has been given above in (a) and (b),
using Proposition \ref{lem}.

Conversely, $\mathcal V$ is $2$-distributive if 
$ \bar x \rightarrow \bar z$,
that is, $ p_1 \rightarrow p_2$.    
$\mathcal V$ is congruence permutable if 
$ \bar x \dashleftarrow \bar z$, 
that is $ \bar z \dashrightarrow \bar x$. 
If both properties hold, then
$ \bar z = \bar p_1(\bar  z, \bar  x) \rightarrow 
\bar p_2(\bar  z, \bar  x) = \bar x$, 
by Proposition \ref{lem}(i), (v)(b), 
taking $s=p_1$, $r=p_2$, $s'=r'= \bar z$ 
and $s''=r''= \bar x$.  

Explicitly, if $j$ is a J{\'o}nsson term for $2$-distributivity,
that is, a majority term, and $p$ is a Maltsev term
for congruence permutability, then we need
to connect $ \bar z = \bar p_1(\bar  z, \bar  x) = \bar j(\bar z, \bar z ,\bar x)$  
with $ \bar j(\bar z, \bar x ,\bar x) = \bar p_2(\bar  z, \bar  x) = \bar x$.
We use $ \bar z \dashrightarrow \bar x$ which is given by
$p$, thus $j(z,p(x,y,z),x)$ is a Pixley term.    

(d) In the above example, $ s = p_1= \bar x \rightarrow
  \bar z = p_2 = r $
follows from $2$-distributivity 
and $  s' = \bar z \rightarrow \bar z =r'$, 
  $s'' = \bar x \rightarrow \bar x = r''$  
are from Proposition \ref{lem}(i).  
Since $2$-distributivity
does not imply congruence permutability,
we actually need $s' \dashrightarrow r''$
in clauses (v)(a) and (v)(b) in Proposition \ref{lem}.  
In (c) above $s' \dashrightarrow r''$
is $ \bar z \dashrightarrow \bar x$,
which needs the additional assumption of 
congruence permutability.
\end{example}

\section{Every $n$-distributive variety  is  $n$-directed distributive} \labbel{main}

\begin{proposition} \labbel{aggfrec}
Suppose that $n \geq 2$ and $\mathcal V$ 
is an $n$-distributive variety, thus
$\mathcal V$ realizes the pattern path

\begin{equation}\labbel{path} 
     \bar x = s_1 \rightarrow s_2 \leftarrow s_3
\rightarrow s_4 \leftarrow \dots s_n=  \bar z
  \end{equation}
with $n-1$ arrows. 

Then we can choose $s_2, s_3, \dots$
in such a way that the above path is realized and, furthermore
  \begin{itemize}   
 \item[(*)]
$s_i \dashrightarrow s_j$ for every $i \leq j \leq n$
such that either $i$ is odd, or $i +2 \leq   j$.    
   \end{itemize} 
 \end{proposition}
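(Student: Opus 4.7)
I would first observe that several cases of (*) are automatic from Proposition~\ref{lem}. The case $j = i$ is reflexivity (Proposition~\ref{lem}(i)). When $i = 1$ we have $s_1 = \bar x$, so $s_1 \dashrightarrow s_j$ holds by Proposition~\ref{lem}(ii) for every $s_j$; dually, when $j = n$ we have $s_i \dashrightarrow s_n = \bar z$ by the same result. For $i$ odd and $j = i+1$, the solid arrow $s_i \rightarrow s_{i+1}$ already present in the path immediately upgrades to a dashed one. Thus the genuine content of (*) is to arrange, by a suitable choice of $s_2,\dots,s_{n-1}$, that $s_i \dashrightarrow s_j$ for all $2 \leq i < j \leq n-1$ with $j - i \geq 2$.

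The plan then is to proceed by induction on $n$, with the base cases $n \leq 3$ handled by the observations above. For the inductive step I would implement the nested-substitution idea of \eqref{set}, recast through Proposition~\ref{lem}(v) so as to work inside $\mathbf F_2$ with binary terms. Concretely, one processes the even indices $i = 2,4,\dots$ one at a time. For each such $i$, replace $s_i$ by $s_i' := \bar s_i(u_i, v_i)$, where $u_i$ and $v_i$ are built out of the neighbouring $s_{i-1}, s_{i+1}$ (and elements further away) so that the dashed arrows guaranteed by the induction hypothesis are positioned correctly; Proposition~\ref{lem}(v)(a) and (v)(k) then transport them through $\bar s_i$ to produce the required $s_i' \dashrightarrow s_j$. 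Simultaneously one modifies the adjacent odd-indexed $s_{i \pm 1}$ by analogous nested substitutions, invoking Proposition~\ref{lem}(v)(b), (v)(d), and (v)(h)--(v)(j) to preserve the solid arrows $s'_{i-1} \rightarrow s'_i \leftarrow s'_{i+1}$ of the pattern path. The endpoints $\bar x$ and $\bar z$ are used freely via Proposition~\ref{lem}(ii) to create or absorb dashed arrows at will.

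The hardest part is the bookkeeping. Each substitution at an even index $i$ threatens to break dashed arrows previously established at $s_i$, and it forces compensating modifications at the adjacent odd indices, which may propagate further. Moreover, a single substitution only creates new dashed arrows of short range; to produce $s_i \dashrightarrow s_j$ with $j - i$ large, the substitution has to be iterated in a nested fashion, each iteration consuming the dashed arrows established at the previous step. Keeping track of which arrows survive at each iteration, and arranging the order of substitutions so that the entire configuration closes up, is where the argument becomes intricate --- essentially an inner induction on $j - i$ nested inside the outer induction on $n$, isolated into an auxiliary claim whose statement encodes exactly the invariants preserved by a single application of the \eqref{set}-substitution.
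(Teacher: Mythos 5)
Your preliminary reductions are correct (the cases $i=j$, $i=1$, $j=n$, and $i$ odd with $j=i+1$ are indeed automatic), but the core of the argument is missing, and the mechanism you sketch for it points in the wrong direction. First, this proposition asks you to keep the alternating path \eqref{path} intact while adding long-range dashed arrows; no left arrow is reversed here. Consequently the substitution \eqref{set} and Proposition \ref{lem}(v)(d) --- whose whole purpose is to turn $\leftarrow$ into $\rightarrow$ --- play no role in this proof; the paper pointedly remarks, right after the proposition, that (v)(d) is first used only in Theorem \ref{thm}. Your plan of ``processing the even indices'' by substituting the neighbours $s_{i\pm1}$ into $\bar s_i$ is the shape of the argument for that later theorem, not for this proposition. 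Second, your outer induction on $n$ is not set up correctly: truncating the path at $s_{n-1}$ does not yield a path ending at $\bar z$, so the inductive hypothesis for $n-1$ does not apply to a sub-path, and you give no reformulation that would make it apply.

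The paper instead fixes $n$ and runs a finite \emph{downward} induction on an even index $h$, maintaining the invariant (*)$_h$ (the restriction of (*) to pairs with $h \leq i$), which holds trivially at $h=n-1$ or $h=n-2$ by Proposition \ref{lem}(i)(ii). The step from (*)$_{h+2}$ to (*)$_{h}$ (Claim \ref{clu}) rests on the concrete substitution \eqref{var}: $s^*_2=s_2$, $s^*_3=s_3$, $s^*_{i+2}=\bar s_{i+2}(s_i^*,\bar z)$ for $2\leq i\leq h$, and $s^*_{i+2}=\bar s_{i+2}(s_h^*,\bar z)$ for $i\geq h$ --- note that the second slot is always $\bar z$ and the first slot is the element \emph{two steps back} (or the fixed $s_h^*$), not a neighbour. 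The path is preserved via Proposition \ref{lem}(v)(f),(h) and their reversals (vi); the new arrows $s_i^*\dashrightarrow s_j^*$ for $i\geq h+2$ come from (v)(e) using the common first argument $s_h^*$, and the boundary cases $i=h,h+1$ from (ii) and (v)(k). Your proposal names the right toolbox and candidly flags the bookkeeping as the hard part, but that bookkeeping is precisely where the proof lives, and the specific substitutions you gesture at would not carry it out.
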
  

\begin{proof} 
It is no loss of generality to consider the terms
witnessing $n$-distributiv\-ity as operations of $\mathcal V$,
and also to  assume that $\mathcal V$ 
has no other operation; in particular, we can assume that 
$\mathcal V$ is idempotent. Alternatively, use 
Lemma \ref{lemnoidemp}. 

In view of Proposition \ref{lem}(i)(ii), for $n \leq 4$    
there is nothing to prove.
So let us assume $n >4$.
For even positive $h \leq n$, consider the following property. 
 \begin{itemize}   
 \item[(*)$_h$]
$s_i \dashrightarrow s_j$ for every $i,j$ such that  $h \leq i \leq j \leq n$
and either $i$ is odd, or $i +2\leq   j$.    
   \end{itemize} 
 
In view of Proposition \ref{lem}(i)(ii),
Property  (*)$_h$ is satisfied for
$n$ odd and $h=n-1$ (thus $h$ is even) and 
 for $n$ even and $h=n-2$.
We will show the following.

\begin{claim}  \labbel{clu}     
 For every even positive $h \leq n-3$,
if there are $s_1, s_2, \dots s_{n-1}, s_n \in F_2$
realizing the path \eqref{path} and such that   (*)$_{h+2}$ holds,
then there are  
$s_1^*, s_2^*, \dots s_{n-1}^*,s_n^* \in F_2$
realizing the path \eqref{path}
and such that   (*)$_{h}$ holds
for the $s_i^*$. 
 \end{claim} 

Assuming we have proved the Claim, a finite induction on 
decreasing $h$ proves the proposition. 
The induction terminates at $h=2$, but then
 (*) is true in view of Proposition \ref{lem}(ii). 
 
So let us prove the Claim.
Assume  that   (*)$_{h+2}$ holds
for certain $s_1, s_2, \dots s_{n-1}, \allowbreak  s_n \in F_2$
realizing \eqref{path}.
Define
\begin{equation}\labbel{var}  
\begin{aligned}  
&\text{$s^*_2= s_2 $, \qquad $ s^*_3 = s_3$,}
\\
& \text{$s^*_{  i +2}= \bar s_{  i +2}(s_{ i }^*, \bar z)$,
 for $2 \leq  i   \leq  h$
and}    
\\
&\text{$s^*_{  i +2}= \bar s_{  i +2}(s_{h}^*, \bar z)$,
 for $ i  \geq h$.}    
\end{aligned}  
  \end{equation}    
Notice that the second and the third lines agree when 
$ i= h$. 

We first check that the sequence of the $s^*_i$ realizes the path
corresponding to $n$-distributivity.
Indeed, $ \bar x \rightarrow s^*_2 \leftarrow s^*_3$
hold by assumption.
Moreover, $ s^*_3 = s_3= \bar s_{3}(\bar x, \bar z)
\rightarrow \bar s_{4}(s_2, \bar z)=\bar s_{4}(s_2^*, \bar z)= s^*_4$,
by Proposition \ref{lem}(v)(f), since 
$s_3 \rightarrow s_4$ and 
$\bar x \rightarrow s_2$  by assumption.
Thus $ s^*_3 \rightarrow s^*_4$. 

Inductively,
we show that if $2 \leq i  < h $ and, say, $i$ is even
and $ s^*_i \leftarrow  s^*_{i+1}$,
then 
$ s^*_{i+2} \leftarrow  s^*_{i+3}$.
Indeed, 
$s^*_{i+2}= \bar s_{i+2}(s_{i}^*, \bar z)
\leftarrow \bar s_{i+3}(s_{i+1}^*, \bar z)
=  s^*_{i+3}$,
by the reversed version (vi) of Proposition \ref{lem}(v)(f),
since  
$s_{i+2} \leftarrow s_{i+3}$ by assumption and 
$ s^*_i \leftarrow  s^*_{i+1}$ by the inductive hypothesis.
The case $i$ odd is similar.

The case $i \geq  h $ is simpler,
in that no inductive hypothesis is needed. 
For $ i \geq h $ and, say, $i$ even,
$s^*_{ i+2}= \bar s_{ i+2}(s_{h}^*, \bar z)
\leftarrow \bar s_{ i+3}(s_{h}^*, \bar z)=
s^*_{ i+3}$,
 by Proposition \ref{lem}(i), (v)(f), (vi),
since $s_{ i+2} \leftarrow  s_{ i+3}$
(here we use $s_{h}^* \leftarrow s_{h}^*$,
which holds by Proposition \ref{lem}(i) and we do not need something like
$s_{i}^* \leftarrow s_{i+1}^*$). Again, the case $i$
odd is similar.
 
Finally, say, for $n$ even, 
 $s^*_{n-1}= \bar s_{n-1}(s_{h}^*, \bar z) \rightarrow \bar z$,
by the first statement in
Proposition \ref{lem}(v)(h), since $n > 4$ and  $s_{n-1} \rightarrow \bar z$.  
If $n$ is odd, use the second statement.
 
Having proved that the sequence of the $s^*_i$
witnesses $n$-distributivity, 
we now show that 
the sequence of the $s^*_i$ satisfies  (*)$_{h}$,
assuming that the sequence of the $s_i$ satisfies (*)$_{h+2}$.

For $ j \geq i \geq h+2 $,
we have $s_i \dashrightarrow s_j$ by
(*)$_{h+2}$, hence
$s^*_{ i}= \bar s_{ i}(s_{h}^*, \bar z)
\dashrightarrow \bar s_{j}(s_{h}^*, \bar z) = s^*_{j}$,
 by Proposition \ref{lem}(i), (v)(e).

For $  i = h+1 $, we already know that
$s^*_{ i}\rightarrow  s^*_{i-1}= s^*_{ h}$, since $i$ is odd.
Hence, if $ j > i = h+1 $,
 $s^*_{ i} 
\dashrightarrow \bar s_j(s^*_{ h}, \bar z) = s_j^*$,
by  Proposition \ref{lem}(ii), (v)(k). Here, since 
$ j > i = h+1 $, we apply the third line in \eqref{var}
in the definition of  $s_j^*$ (of course, in the case $j=i$ 
there is nothing to prove, this follows from Proposition \ref{lem}(i)). 

If  $  i = h$ and $j \geq i+2$, then 
 $s^*_{ i} =s^*_{ h} 
\dashrightarrow \bar s_j(s^*_{ h}, \bar z) =s^*_{ j}$,
again by Proposition \ref{lem}(ii), (v)(k). 
\end{proof}

So far, we have not used the powerful property 
stated in Proposition \ref{lem}(v)(d). 
This property will play a key role in the proof of the following
theorem.

\begin{theorem} \labbel{thm}
For every $n \geq 2$, every $n$-distributive variety  is  $n$-directed distributive.
Namely, if $\mathcal V$ has   J{\'o}nsson terms $t_1, \dots, t_{n-1}$,
then $\mathcal V$  
has  directed J{\'o}nsson terms $d_1, \dots, d_{n-1}$.
 \end{theorem}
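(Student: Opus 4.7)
The plan is to start from the J{\'o}nsson path produced by Proposition~\ref{aggfrec}, namely $\bar x = s_1 \rightarrow s_2 \leftarrow s_3 \rightarrow s_4 \leftarrow \cdots s_n = \bar z$ together with the forward dashed arrows guaranteed by property~(*), and to modify this into an all-solid-forward path $\bar x = d_1 \rightarrow d_2 \rightarrow d_3 \rightarrow \cdots \rightarrow d_n = \bar z$, which by Example~\ref{ex}(b) is precisely the realization of $n$-directed distributivity. The approach is inductive: reverse the backward edges one even index at a time, modelled on the substitution~\eqref{set} from the introduction.

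At stage $k$ I assume that we have a mixed path whose first $2k$ edges are already forward, whose later edges still form a J{\'o}nsson alternation, and for which the forward dashed arrows of~(*) persist among the current terms. To perform stage $k+1$, I would apply Proposition~\ref{lem}(v)(d) to reverse the backward edge $s_{2k+2} \leftarrow s_{2k+3}$ of the tail: the crucial $s'' \dashrightarrow r'$ hypothesis of~(v)(d) is supplied free of charge by that very backward arrow, since a solid arrow implies the dashed arrow in the same direction. Choosing $s' = r' = s_{2k+2}$ and $s'' = r'' = s_{2k+3}$ from the current path yields new elements $s_{2k+2}^{*} = \bar s_{2k+2}(s_{2k+2},s_{2k+3})$ and $s_{2k+3}^{*} = \bar s_{2k+3}(s_{2k+2},s_{2k+3})$ connected by a solid forward arrow. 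All of the remaining terms $s_j^{*}$ for $j \geq 2k+4$ must be updated simultaneously by an analogous binary substitution (in the spirit of \eqref{var}) so that the J{\'o}nsson alternation in the unprocessed tail and the forward dashed arrows of~(*) survive the modification.

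The new forward arrow from $d_{2k+1}$ into $s_{2k+2}^{*}$ and the persistence of the subsequent solid alternating arrows in the tail are to be recovered by Proposition~\ref{lem}(v)(b) and its dual~(vi), which is, in the author's own words, the clause that ``helps to maintain the remaining identities''. Iterating the construction for $\lfloor (n-1)/2 \rfloor$ stages leaves no backward edges, so one ends up with a solid-forward pattern path $\bar x = d_1 \rightarrow d_2 \rightarrow \cdots \rightarrow d_n = \bar z$, which corresponds exactly to directed J{\'o}nsson terms $d_1, \dots, d_{n-1}$ with the same index range as the original $t_1,\dots,t_{n-1}$.

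The main obstacle lies in preserving property~(*) through each inductive step. The new elements $s_j^{*}$ are composite binary substitutions, so the dashed forward arrows connecting non-adjacent modified terms have to be re-established from the dashed arrows that held among the original $s_j$. This is precisely what Proposition~\ref{lem}(v)(a), (v)(e) and especially~(v)(k) are designed to do: they propagate dashed arrows through compositions even in the absence of the more restrictive solid hypotheses required by~(v)(b) or~(v)(d). Choosing the right substitution at each stage and verifying all the resulting dashed connections is combinatorially intricate already for small $n$, as the author warns via Example~\ref{espl}, but conceptually no tools beyond the clauses of Proposition~\ref{lem} already cited appear to be required.
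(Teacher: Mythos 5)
Your overall strategy coincides with the paper's: start from Proposition~\ref{aggfrec}, reverse the backward edges one at a time via the substitution $s^*_h=\bar s_h(s_h,s_{h+1})$, $s^*_{h+1}=\bar s_{h+1}(s_h,s_{h+1})$, obtain the new solid arrow $s^*_h\rightarrow s^*_{h+1}$ from Proposition~\ref{lem}(v)(d) by using the old backward arrow twice, and repair the rest of the path with (v)(a), (v)(b) and property (*). This is exactly Claim~\ref{cl2}. There is, however, a concrete gap in your treatment of the junction between the already-directed head and the newly modified pair. You leave $d_1,\dots,d_{2k+1}$ untouched and update only the terms from index $2k+2$ onward, asserting that the arrow $d_{2k+1}\rightarrow s^*_{2k+2}$ is ``recovered by (v)(b)''. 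It is not. To apply (v)(b) with target $\bar s_{2k+2}(s_{2k+2},s_{2k+3})$ you must exhibit $d_{2k+1}$ as a composite $\bar s_{2k+2}(a,b)$ whose second component $b$ admits a \emph{solid} arrow into $s_{2k+3}$; the only available such $b$ is $s_{2k+3}$ itself (by reflexivity), since (*) supplies only dashed arrows and no other vertex of the path points solidly into $s_{2k+3}$. Hence $d_{2k+1}$ would already have to be of the form $\bar s_{2k+2}(a,s_{2k+3})$, which it is not if it has been left unmodified. The device of Proposition~\ref{lem}(v)(j), which rescues the analogous step at the very first reversal, works only because the source there is $\bar x$, a projection for which $\bar x\dashrightarrow s$ holds for every $s$; it does not extend to an interior vertex $d_{2k+1}$.

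The repair is precisely what the paper does in \eqref{varr}: at each stage you must rewrite the \emph{entire} sequence, setting $s^*_i=\bar s_i(s_i,s_{h+1})$ for all $i\le h$ (so that every head term acquires $s_{h+1}$ in its second slot) and $s^*_i=\bar s_i(s_h,s_i)$ for $i\ge h+1$; then every head arrow $s^*_i\rightarrow s^*_{i+1}$, including the junction one, follows from (v)(b) together with the instance $s_i\dashrightarrow s_{h+1}$ of (*), valid because $i+2\le h+1$. With that single correction --- and with the bookkeeping needed to re-verify (*) for the new sequence, which you rightly flag but which requires attention to the parity restriction built into (*) --- your argument becomes the paper's proof.
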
 

\begin{proof}
For $n = 2$, there is nothing to prove, so let us assume
$n \geq 3$.  

In any case, $\mathcal V$ realizes the pattern path
$ \bar x \rightarrow s_2 \leftarrow s_3
\rightarrow s_4 \leftarrow \dots \bar z$,
with $n-1$ arrows and, by Proposition \ref{aggfrec},
we may assume that (*) is satisfied. 

Define 
$s^*_2= \bar s_2 (s_2, s_3)$ and 
$s^*_{i}= \bar s_{i}(s_{2}, s_i)$, for $i \geq 3$, thus
$s^*_3= \bar s_3 (s_2, s_3)$.   

We have $ \bar x \rightarrow s^*_2$ by  
the first statement in
 Proposition \ref{lem}(v)(j), using $ \bar x \rightarrow s_2$ twice. 

Moreover,
$s^*_{i}= \bar s_{i}(s_{2}, s_i)
\rightarrow \bar s_{i+1}(s_{2}, s_{i+1}) = s_{i+1}^*$,
 for $i \geq 3$, $i$ odd,
by the assumptions and Proposition \ref{lem}(i), (v)(b),
noticing that $i+1 \geq 4$, thus $s_{2} \dashrightarrow  s_{i+1}$,
by (*).   

On the other hand, if
 $i \geq 4$ and $i$ is even, then
$s^*_{i}= \bar s_{i}(s_{2}, \allowbreak s_i)
\leftarrow \bar s_{i+1}(s_{2}, \allowbreak  s_{i+1})=
s^*_{i+1}$, by the
reversed version (vi) of Proposition \ref{lem}(v)(b),
since $i \geq 4$, thus $s_{2} \dashrightarrow  s_{i}$,
by (*).   
Thus $ s_3^* \rightarrow s_4^* \leftarrow s_5^*
\rightarrow s_6^* \leftarrow \dots $.

How are connected 
$s^*_2$ and $s^*_3$, then?
Here Clause (v)(d) in Proposition \ref{lem} 
comes to the rescue.
We have 
$s^*_2= \bar s_2 (s_2, s_3)
\rightarrow 
 \bar s_3 (s_2, s_3)= s^*_3$, 
taking $r=s''=r''=s_3$
and $s=r'=s'=s_2$ in Proposition \ref{lem}(v)(d)
and using
twice $s_3 \rightarrow s_2$, both as 
$s \leftarrow r$ and as $s'' \dashrightarrow r'$.   

In the end, we get 
\begin{equation}\labbel{primpass}    
 \bar x \rightarrow s_2^* 
\rightarrow  s_3^* \rightarrow s_4^* \leftarrow s_5^*
\rightarrow s_6^* \leftarrow \dots 
   \end{equation}
 three right arrows followed
by an alternating path.

So far, the arguments prove the theorem in the cases 
$n=3$ and $n=4$, since, say in the latter case, 
$s^*_3= \bar s_3 (s_2, s_3) \rightarrow \bar z$,
by the first statement in Proposition \ref{lem}(v)(h), 
applying twice  $s_3 \rightarrow \bar z$, which holds 
since $n=4$. Here we are taking
$s=s''= s_3$. 

In order to prove the general case, we need an induction.
Before starting the induction, we need to check that the sequence
$s_2^*, s_3^*, \dots $ still satisfies (*).
Indeed,  $s_2^* \dashrightarrow s_{2}^*$ 
 by Proposition \ref{lem}(i), and 
if $j \geq 4$, then
$s^*_2= \bar s_2 (s_2, s_3) \dashrightarrow 
\bar s_{j} (s_2, s_j)= s_{j}^*$, by Proposition \ref{lem}(i), (v)(a),
since $s_2 \dashrightarrow s_{j}$
and   $s_3 \dashrightarrow s_{j}$, by (*).
If $3 \leq i < j$, then 
$s_{i}^* = \bar s_{i} (s_2, s_i)
\dashrightarrow \bar s_{j} (s_2, s_j)= s_{j}^*$, 
by Proposition \ref{lem}(i), (v)(a),
since  $s_i \dashrightarrow s_{j}$
and   $s_2 \dashrightarrow s_{j}$, by (*),
$j$ being $\geq 4$. We have proved that 
the sequence
$s_2^*, s_3^*, \dots $ 
realizes \eqref{primpass} and satisfies (*).  

\begin{claim} \labbel{cl2}
Suppose that $h$ is even, $4 \leq h \leq n-1$ 
 and $\mathcal V$ realizes the pattern path
$ \bar x \rightarrow s_2 \rightarrow s_3
\rightarrow s_4 \rightarrow \dots \rightarrow s_h \leftarrow 
s_{h+1} \rightarrow s_{h+2} \leftarrow 
\dots \bar z$,
with $h-1$ right arrows followed by a sequence of alternating
arrows. Suppose further that 
 (*) from  Proposition \ref{aggfrec} is satisfied.

Then $\mathcal V$ satisfies the above conditions with $h+2$
in place of $h$ 
(with just $n-1$ arrows in the exceptional case $h=n-1$).  
\end{claim}    

To prove the Claim, define
\begin{equation}\labbel{varr}  
\begin{aligned}  
& \text{$s^*_{i}= \bar s_{i}(s_{i},s_{h+1})$,
 for $i \leq  h$,
and}    
\\
& \text{$s^*_{i}= \bar s_{i}(s_{h},s_{i})$,
 for $i \geq  h+1$.}
\end{aligned}  
  \end{equation}    

For $i<h$, we have
$s^*_{i}= \bar s_{i}(s_{i},s_{h+1}) \rightarrow 
\bar s_{i+1}(s_{i+1},s_{h+1}) = s^*_{i+1}$ by Proposition \ref{lem}(i), (v)(b),
since  $s_i \allowbreak \rightarrow s_{i+1}$, by the assumptions. 
We have also used $s_i \dashrightarrow s_{h+1}$, which holds by (*),
since $i < h$, hence $i+2 \leq h+1$.  

For $i=h$, we employ the method
used above when dealing with $s_2$ and $s_3$. Namely,
$s^*_h= \bar s_h (s_h, s_{h+1})
\rightarrow 
 \bar s_{h+1} (s_h, s_{h+1}) = s^*_{h+1}$, by Proposition \ref{lem}(i), (v)(d),
 using
twice $s_{h+1} \rightarrow s_h$. 
   
For $i \geq h+1$,  $i$ odd, as usual by now,
$s^*_i= \bar s_i (s_h, s_{i})
\rightarrow 
 \bar s_{i+1} (s_h, s_{i+1}) = s^*_{i+1}$, 
 by Proposition \ref{lem}(i), (v)(b),
 using
twice $s_{i} \rightarrow s_{i+1}$, and since 
$s_{h} \dashrightarrow s_{i+1}$, by (*),
noticing that $i+1 \geq h+2$, since $i \geq h+1$. 
The case $i \geq h+1$, $i$ even is similar: 
$s^*_i= \bar s_i (s_h, s_{i})
\leftarrow 
 \bar s_{i+1} (s_h, s_{i+1}) = s^*_{i+1}$, 
since $s_{i} \leftarrow s_{i+1}$.
Notice that if $i$ is even, then   $i \geq h+2$,
since $h$  is even, hence (*) actually gives
 $s_{h} \dashrightarrow s_{i}$. 
 
So far, we have showed that 
the sequence of the $s_i^*$  realizes the pattern path
$ \bar x \rightarrow s_2 \rightarrow s_3
\rightarrow \dots \rightarrow s_h \rightarrow s_{h+1} \rightarrow 
s_{h+2} \leftarrow 
s_{h+3} \rightarrow s_{h+4} \leftarrow 
\dots \bar z$.
It remains to show that 
the sequence of the $s_i^*$ also satisfies (*).

This is standard by now when
$i < j \leq h$, since then $i+2 \leq h+1$
hence we can apply (*) holding for the sequence of the $s_i$,
namely, $s_i \dashrightarrow  s_{h+1}$ 
(the case $i=j$ is from Proposition \ref{lem}(i)).   
Similarly, if $ h +1\leq i < j $,
then we always have 
 $s_{h} \dashrightarrow s_{j}$. 
The cases when 
$i  \leq h < j$ present no particular difficulty,
once we check that (*) for the $s_i$ can be applied.
For example,
$s^*_{h-1}= \bar s_{h-1} (s_{h-1}, s_{h+1})
\dashrightarrow 
 \bar s_{h+1} (s_h, s_{h+1}) = s^*_{h+1}$, 
 by the usual Proposition \ref{lem}(i), (v)(a),
since $s_{h-1} \dashrightarrow s_{h+1}$
and $s_{h-1} \dashrightarrow s_{h}$,
by (*) for the $s_i$, $h-1$ being odd.
Similarly, 
in 
$s^*_{h-1}= \bar s_{h-1} (s_{h-1}, s_{h+1})
\dashrightarrow  \bar s_{h+2} (s_h, s_{h+2}) = s^*_{h+2}$,
besides $s_{h-1} \dashrightarrow s_{h}$,
we use $s_{h+1} \dashrightarrow s_{h+2}$,
since  $h+1$ is odd
and $s_{h-1} \dashrightarrow s_{h+2}$,
holding by (*).
We have already
proved $s_h \rightarrow 
s_{h+1}$. In
$s^*_{h}= \bar s_{h} (s_{h}, s_{h+1})
\dashrightarrow 
 \bar s_{h+2} (s_h, s_{h+2}) = s^*_{h+2}$
we use again
$s_{h+1} \dashrightarrow s_{h+2}$. 
In all the remaining cases the significant components
are   sufficiently ``far  away'' so that 
(*) for the sequence of the $s_i$
can be always applied with no need of special care.

Having proved Claim \ref{cl2}, the theorem follows from the arguments
at the beginning of the proof. There we proved the assumptions
of the Claim for the case $h=4$, thus a finite induction shows that we can have
$h \geq n$, that is, a sequence of directed terms, by 
  Example \ref{ex}(b). 
 \end{proof}

Recall the definitions of alvin, Gumm and directed Gumm terms from
Example \ref{ex}. 

\begin{theorem} \labbel{thmag}
  \begin{enumerate}    
\item  
For every $n \geq 2$, every variety 
with $n$ alvin terms has 
  $n$-directed  J{\'o}nsson terms.
\item  
For every $n \geq 2$, every variety 
with $n$ Gumm terms has 
  $n$-directed Gumm terms.
 \end{enumerate} 
 \end{theorem}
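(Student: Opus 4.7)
The plan is to prove both parts by combining the proof strategy of Theorem \ref{thm} with the generator-swap automorphism $\sigma$ of $\mathbf F_2(\mathcal V)$ exchanging $\bar x$ and $\bar z$. A direct verification (conjugate any witnessing ternary term $t(x,y,z)$ by the substitution $x\leftrightarrow z$, obtaining $t'(x,y,z)=t(z,y,x)$, and read off the identities from Definition \ref{defs}) shows that $\sigma$ reverses every arrow in $\mathbf F_2$ while preserving the solid/dashed distinction. Combined with reversing the reading direction of a path (which also flips every arrow), $\sigma$ reverses the arrow pattern of a path from $\bar x$ to $\bar z$ as a sequence.

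For part (1), I split on the parity of $n$. For $n$ odd, the alvin pattern $\leftarrow,\rightarrow,\dots,\leftarrow,\rightarrow$ reverses to the J\'onsson pattern $\rightarrow,\leftarrow,\dots,\rightarrow,\leftarrow$, so $n$-alvin implies $n$-distributive, and Theorem \ref{thm} finishes the argument. For $n$ even, the alvin pattern is a palindrome and the $\sigma$-trick is vacuous; I would then directly adapt Theorem \ref{thm}'s proof. The key modifications are: swap the parities throughout Proposition \ref{aggfrec} to obtain an alvin-analog of property (*), stating $s_i\dashrightarrow s_j$ whenever $i\le j$ and either $i$ is even positive or $i+2\le j$ (its proof mirrors the original); and in the initial step of Theorem \ref{thm}'s proof, exploit the reversed edge $\bar x\leftarrow s_2$ already present at the start of the alvin path via Proposition \ref{lem}(v)(d), setting $s_2^{*}=\bar s_2(\bar x,s_2)$ to produce $\bar x\to s_2^{*}$. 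A variant of Claim \ref{cl2}, adapted to the swapped parities, then extends the right-pointing portion through the path.

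For part (2), the Gumm path $\bar x\dashleftarrow s_2\to s_3\leftarrow\dots\bar z$ consists of a single dashed left edge followed by a J\'onsson-like sub-path of length $n-2$ (starting with $\to$). I would first apply (a localized version of) Theorem \ref{thm} to transform the sub-path $s_2\to s_3\leftarrow\dots\bar z$ into a directed sub-path $s_2\to s_3'\to\dots\to\bar z$ of the same length. Crucially, the initial dashed edge is preserved: since every transformation is of the form $s_i\mapsto\bar s_i(\cdot,\cdot)$, and $s_2\dashrightarrow\bar x$ is given at the outset, Proposition \ref{lem}(v)(a),(e),(k) propagate the dashed relation to each newly constructed $s_i^{*}\dashrightarrow\bar x$. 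This yields the path $\bar x\dashleftarrow s_2\to s_3'\to\dots\to\bar z$. Finally, applying $\sigma$ together with reading-direction reversal moves the dashed edge from the left border to the right, producing the directed Gumm path $\bar x\to u_2\to\dots\to u_{n-1}\dashleftarrow\bar z$.

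The main obstacle is part (1) for even $n$: both the first and the last solid-left edges $\bar x\leftarrow s_2$ and $s_{n-1}\leftarrow\bar z$ of the alvin path must ultimately become right-pointing, while preserving the intermediate structure and without exceeding $n-1$ arrows. The two ends have to be handled asymmetrically — the left end by the $(v)(d)$-flip described above, and the right end by a corresponding construction tailored to $\bar z$ (using the reversed arrow at $s_{n-1}\leftarrow\bar z$ together with the alvin-analog of~(*)) — and interlocking these two reshapings requires careful parity-dependent bookkeeping. Once part (1) is in hand, part (2) reduces to verifying that the initial dashed edge survives the manipulations, which is routine.
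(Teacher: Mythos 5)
Your $\sigma$-plus-reversal observation is sound (it is exactly the paper's ``exchanging the order of terms and of variables''), and your part~(1) for $n$ odd is a correct, clean reduction to Theorem~\ref{thm}. But the two hard cases are exactly the ones you leave as sketches, and both sketches have real problems. For part~(1) with $n$ even, you correctly identify that the alvin path has solid left edges at \emph{both} borders and that ``interlocking the two reshapings requires careful parity-dependent bookkeeping'' --- but you never do that bookkeeping, so the main case is a plan, not a proof. The paper sidesteps the entire difficulty with one move you are missing: relabel $s_3=r_2, s_4=r_3,\dots$ and insert a dummy vertex $s_2=s_1=\bar x$. Since $\rightarrow$ is reflexive, the padded path $\bar x\rightarrow\bar x\leftarrow r_2\rightarrow\cdots$ witnesses $(n+1)$-distributivity with the standard J\'onsson parity, Theorem~\ref{thm} applies verbatim, and tracing the construction shows $s_2^\Diamond=\bar x$ throughout, so the resulting $(n{+}1)$-directed path collapses back to $n$ directed terms. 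No parity-swapped redo of Proposition~\ref{aggfrec} or Claim~\ref{cl2} is needed, for either parity of $n$.

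For part~(2) the gap is sharper. A ``localized version of Theorem~\ref{thm}'' applied to the sub-path $s_2\rightarrow s_3\leftarrow\cdots\rightarrow\bar z$ does not exist as stated: the machinery is anchored to the generators, since Proposition~\ref{lem}(ii) gives $\bar x\dashrightarrow s$ only for $\bar x$, and clauses (v)(h),(v)(j) use that $\bar x,\bar z$ are the projections; none of this is available with $s_2$ in the role of the left endpoint, and the sub-path also carries the wrong parity for property~(*). Your propagation step is likewise unjustified: to get $s_2^{*}=\bar s_2(s_2,s_3)\dashrightarrow\bar x$ from (v)(k) you would need $s_3\dashrightarrow\bar x$, which a Gumm path does not provide (it would be an extra permutability-type hypothesis). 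The paper's route is to apply your $\sigma$-reversal \emph{first}, so that the dashed edge lands at the right border as $s_{n-1}\dashleftarrow\bar z$, where the third statement of Proposition~\ref{lem}(v)(h) --- designed precisely for $\bar z$ --- preserves it through every transformation; one then runs the induction of Theorem~\ref{thm} stopping at $h=n-1$ (for $n$ odd), or combines this with the dummy-vertex trick above (for $n$ even). I recommend you adopt the padding trick and move the dashed edge to the $\bar z$ side before transforming; with those two changes your outline becomes the paper's proof.
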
 

\begin{proof}
(1) can be proved by arguments similar to
Theorem \ref{thm}.
Otherwise, with no need  
of repeating the arguments,
if the alvin condition is witnessed by
$ \bar x \leftarrow r_2 \rightarrow r_3 \leftarrow r_4
\rightarrow  
\dots \bar{z} $
relabel the elements as 
$s_3=r_2$,  
$s_4=r_3$, \dots and take 
$s_2=s_1= \bar x$.
The sequence  
$ \bar x \rightarrow s_2 \leftarrow s_3 \rightarrow s_4 \leftarrow s_5
\rightarrow  \dots \bar{z} $
 witnesses $n+1$-distributivity.
Applying the proof of Theorem \ref{thm},
we get a sequence  
$ \bar x \rightarrow s_2^\Diamond \rightarrow s_3^\Diamond
 \rightarrow s_4^\Diamond \rightarrow 
\dots \bar{z} $ witnessing
$n+1$-directed distributivity. 

Since we have taken $s_2=\bar x$,
going through the proof of Theorem \ref{thm},
one sees that still $s_2^\Diamond =\bar x$. 
Thus the sequence $ \bar x = s_1^\Diamond 
= s_2^\Diamond \rightarrow s_3^\Diamond
 \rightarrow s_4^\Diamond \rightarrow 
\dots \bar{z} $ witnesses
$n$-directed distributivity. 

(2)
Exchanging at the same time the order of terms
and of variables,
the existence of Gumm terms corresponds to the
realizability of
$ \bar x  \rightarrow s_2 \leftarrow s_3
\rightarrow s_4 \leftarrow s_5 \rightarrow s_6 \leftarrow \dots
\rightarrow s_ {n-3} \leftarrow s_ {n-2} \rightarrow s_ {n-1}\dashleftarrow  \bar{z} $, for $n$ odd and of  
$ \bar x \leftarrow s_2 \rightarrow s_3 \leftarrow s_4
\rightarrow s_5 \leftarrow s_6 \rightarrow s_7 \leftarrow \dots
\rightarrow s_ {n-3} \leftarrow s_ {n-2} \rightarrow s_ {n-1}\dashleftarrow  \bar{z} $, for $n$ even.  

For $n$ odd, repeat the proof of Theorem \ref{thm},
stopping the induction at $h=n-1$ 
(if we reverse the dashed arrow in 
$s_ {n-1}\dashleftarrow  \bar{z}$, a trivial condition
arises).

For $n$ even, use the arguments in (1). 
In both cases one needs the third statement in 
Proposition \ref{lem}(v)(h). 
\end{proof}

\begin{example} \labbel{espl}
We now show how to explicitly get  a sequence 
of directed J{\'o}nsson terms from a sequence
of J{\'o}nsson terms. We exemplify the method in the case of
$6$-distributive varieties, thus there are 
(nontrivial) J{\'o}nsson terms $t_1, t_2,t_3, t_4, \allowbreak  t_{5}$.

Expressing the condition by means of pattern paths,
we have $\bar x \mathrel{\stackrel{t_1}{ \rightarrow }} s_2
\mathrel{\stackrel{t_2}{ \leftarrow  }} s_3
\mathrel{\stackrel{t_3}{ \rightarrow }}  s_4
\mathrel{\stackrel{t_{4}}{ \leftarrow  }} s_{5} 
\mathrel{\stackrel{t_{5}}{ \rightarrow }} \bar z $,
where    we write, say, 
$s_2 \mathrel{\stackrel{t_2}{ \leftarrow  }} s_3$
to mean that $t_2$ is the ternary term
witnessing   $s_2  \leftarrow   s_3$, as
in Definition \ref{defs}.
In particular, $x \approx t_1(x,x,z) $,
$t_1(x,z,z) \approx  \hat s_2 \approx  t_2 (x,z,z)$,
$t_2(x,x,z) \approx \hat s_3 \approx t_3 (x,x,z)$
and so on.

Following the proof of Claim \ref{clu}, let     
$s^*_2= s_2 $,  $ s^*_3 = s_3$,
$s^*_{  4}= \bar s_{  4}(s_{ 2 }, \bar z)$ and
$s^*_{  5}= \bar s_{  5}(s_{ 2 }, \bar z)$.
The arguments in the proof of Claim \ref{clu}
show that $s^*_2,s^*_3,s^*_4,s^*_5$
still witness $6$-distributivity.
The corresponding terms are   
\begin{align*}  
&t_1^*=t_1,  \qquad  \qquad t_2^*=t_2,
\\
&t_3^*(x,y,z)=t_3(t_1(x,y,z),y,z),
\\
&t_4^*(x,y,z)=t_4(t_1(x,z,z),t_1 (y,z,z),z), \text{ and} 
\\ 
&t_5^*(x,y,z)= t_5(t_1(x,z,z),t_1 (y,z,z),z).
\end{align*} 
It is easily verified that the above terms
verify the identities for $6$-distributivity.
The terms are obtained by  
analyzing the proof of Proposition \ref{lem};
for example $s^*_3 = \bar s_3(\bar x, \bar z) \rightarrow 
\bar s_4(s_2, \bar z) = s^*_4$ is
witnessed by $t_3^*$.
Indeed, since $ s_3 \rightarrow s_4$
is witnessed by $t_3$,   
$t=t_3$ in the proof of Proposition \ref{lem}(v)(a)(b).  
 We thus have $s'=\bar x$, $r'=s_2$, $s''=r''=\bar z$.
Now, $s' \rightarrow r'$ is witnessed by $t_1(x,y,z)$
and $s' \dashrightarrow r''$   is witnessed by the projection
onto the second component, as given by the proof of Proposition \ref{lem}(ii);
the third component here is fixed and is constantly $\bar z$.  
The proof of Proposition \ref{lem}(v)(a)(b) thus gives
$t_3^*(x,y,z)=t_3(t_1(x,y,z),y,z)$,
through the proof  of Proposition \ref{lem}(iii).

Moreover,  the proof of Claim \ref{clu}
gives, among other,
$s_2^* \dashrightarrow s_4^*$.
This is verified by the term 
$t_{24}^*(x,y,z)=t_3(t_1(x,z,z),t_1(y,z,z),t_1(y,z,z))$.
Indeed, 
\begin{align*}  
&t_1^*(x,z,z)  \approx \hat s_2^* = \hat s_2 \approx  t_1(x,z,z) \approx 
t_{24}^*(x,x,z) \text{ and } 
\\
&t_{24}^*(x,z,z)  \approx  t_3(t_1(x,z,z),z,z)=
t_{3}^*(x,z,z) \approx  \hat s_4^*. 
 \end{align*}
Similarly, 
$t_{25}^*(x,y,z)=t_4(t_1(x,z,z), \allowbreak t_1(x,z,z),t_1(y,z,z))$
satisfies
\begin{align*}  
&t_1^*(x,z,z) \approx \hat s_2^* = \hat s_2 \approx  t_1(x,z,z) \approx 
t_{25}^*(x,x,z) \text{  and } 
\\
&t_{25}^*(x,z,z) \approx   t_4(t_1(x,z,z), \allowbreak t_1(x,z,z),z)=
t_{4}^*(x,x,z)  \approx \hat s_5^*.
\end{align*} 
Moreover, 
$t_{35}^*(x,y,z)=t_4(t_2(x,y,z), \allowbreak t_2(x,y,z), \allowbreak t_2(y,y,z))$
satisfies
\begin{align*}  
&t_2^*(x,x,z) \approx  \hat s_3^* = \hat s_3 \approx  t_2(x,x,z) \approx 
t_{35}^*(x,x,z) \text{  and } 
\\
&t_{35}^*(x,z,z) \approx  t_4(t_2(x,z,z), \allowbreak  t_2(x,z,z),z) \approx
t_{4}^*(x,x,z) \approx  \hat s_5^*. 
\end{align*} 

Then let    
$s^{\Diamond}_2= \bar s_2^{*}(s_2^{*},s_3^{*}) $, 
$s^{\Diamond}_3= \bar s_3^{*}(s_2^{*},s_3^{*}) $, 
$s^{\Diamond}_4= \bar s_4^{*}(s_2^{*},s_4^{*}) $, 
$s^{\Diamond}_5= \bar s_5^{*}(s_2^{*},s_5^{*}) $.
We have 
$\bar x \rightarrow s^{\Diamond}_2 \rightarrow s^{\Diamond}_3
\rightarrow s^{\Diamond}_4 \leftarrow s^{\Diamond}_5 \rightarrow
\bar z$. 
The corresponding terms are 
\begin{align*}  
t^{\Diamond}_1(x,y,z)&=t^{*}_1(t^{*}_1(x,y,z),t^{*}_2(x,x,y),t^{*}_2(x,x,z)),
\\
t^{\Diamond}_2(x,y,z)&=t^{*}_2(t^{*}_1(x,z,z),t^{*}_2(x,y,z),t^{*}_2(x,x,z)),
\\
t^{\Diamond}_3(x,y,z)&=t^{*}_3(t^{*}_1(x,z,z),t_{24}^*(x,y,z),t^{*}_3(x,y,z)),
\\
t^{\Diamond}_4(x,y,z)&=t^{*}_4(t^{*}_1(x,z,z),t_{24}^*(x,y,z),t^{*}_4(x,y,z)),
\\
t^{\Diamond}_5(x,y,z)&=t^{*}_5(t^{*}_1(x,z,z),t_{1}^*(y,z,z),t^{*}_5(x,y,z)),
 \end{align*} 
which satisfy the directed equations up to $t_3$ and then 
the J{\'o}nsson equations 
$t^{\Diamond}_3(x,z,z) \approx t^{\Diamond}_4(x,z,z)$,
$t^{\Diamond}_4(x,x,z) \approx t^{\Diamond}_5(x,x,z)$ and
 $t^{\Diamond}_5(x,z,z) \approx z$.

Finally, setting
$s^{\clubsuit}_2= \bar s_2^{\Diamond}(s_2^{\Diamond},s_5^{\Diamond}) $, 
$s^{\clubsuit}_3= \bar s_3^{\Diamond}(s_3^{\Diamond},s_5^{\Diamond}) $, 
$s^{\clubsuit}_4= \bar s_4^{\Diamond}(s_4^{\Diamond},s_5^{\Diamond}) $, 
$s^{\clubsuit}_5= \bar s_5^{\Diamond}(s_4^{\Diamond},s_5^{\Diamond}) $,
we get a sequence witnessing directed distributivity.
The relative terms are
\begin{align*}  
t^{\clubsuit}_1(x,y,z)&=t^{\Diamond}_1
(t^{\Diamond}_1(x,y,z),t_{4}^{\Diamond}(x,x,y),t^{\Diamond}_4(x,x,z)),
\\
t^{\clubsuit}_2(x,y,z)&=t^{\Diamond}_2
(t^{\Diamond}_2(x,y,z),t_{25}^{\Diamond}(x,y,z),t^{\Diamond}_4(x,x,z)),
\\
t^{\clubsuit}_3(x,y,z)&=t^{\Diamond}_3
(t^{\Diamond}_3(x,y,z),t_{35}^{\Diamond}(x,y,z),t^{\Diamond}_4(x,x,z)),
\\
t^{\clubsuit}_4(x,y,z)&=t^{\Diamond}_4
(t^{\Diamond}_3(x,z,z),t_{4}^{\Diamond}(x,y,z),t^{\Diamond}_4(x,x,z)),
\\
t^{\clubsuit}_5(x,y,z)&=t^{\Diamond}_5
(t^{\Diamond}_3(x,z,z),t_{3}^{\Diamond}(y,z,z),t^{\Diamond}_5(x,y,z)),
\end{align*} 
where $t_{25}^{\Diamond}$ and 
$t_{35}^{\Diamond}$  are defined by
$t_{25}^{\Diamond}(x,y,z)=
t_{25}^*(t_1^*(x,z,z),t_{25}^*(x,y,z), \allowbreak t_{35}^*(x,y,z))$
and $t_{35}^{\Diamond}(x,y,z)=
t_{35}^*(t_1^*(x,z,z),t_{25}^*(x,y,z),t_{35}^*(x,y,z))$.
 Such terms satisfy
$ t_{2}^{\Diamond}(x,x, \allowbreak z)
 \approx  s_{2}^{\Diamond} \approx  t_{1}^{\Diamond}(x,z,z)
 \approx t_{25}^{\Diamond}(x,x,z)$,
$t_{25}^{\Diamond}(x,z,z) \approx t_{4}^{\Diamond}(x,x,z)
\approx s_{5}^{\Diamond} $, 
$ t_{3}^{\Diamond}(x,x,z)
 \approx  s_{3}^{\Diamond} \approx  t_{2}^{\Diamond}(x,z,z)
 \approx t_{35}^{\Diamond}(x,x,z)$ and
$t_{35}^{\Diamond}(x,z,z) \approx t_{4}^{\Diamond}(x,x,z)
\approx s_{5}^{\Diamond} $.

While possibly some slightly simpler terms may work in the 
present case,
for larger $n$ the complexity of the terms increases 
proportionally to $n^2$, so that a direct proof obtained by nesting
the ternary terms seems absolutely unfeasible, unless
a simplified  proof can be found using brand new ideas.  
 \end{example}

\section{Adding edges to undirected paths in locally finite varieties} \labbel{adde}

We now deal with arbitrary undirected paths from $ \bar x$ to $ \bar  z$,
namely, we do not assume that the directions of the arrows alternate.
Under a finiteness assumption, we
 show that we can always add dashed right arrows between
pairs of vertexes in the path. On the contrary, 
left arrows cannot be generally added, since congruence distributivity
does not imply $n$-permutability, for some $n$.
Compare Example \ref{ex}(g).

\begin{assumption} \labbel{ass}    
In detail, we fix some variety $\mathcal V$ and
some $n \geq 1$. It is no loss of generality 
to assume that $\mathcal V$ is idempotent,
arguing as at the beginning of the proof of Proposition \ref{aggfrec},
when necessary. 
We deal with a sequence 
$ \bar  x= s_1, s_2, \dots, s_{n} =  \bar  z$
such that, for every $i < n$,
either  $s_i \rightarrow s_{i+1}$,
or  $s_i \leftarrow s_{i+1}$, or
$s_i \dashleftarrow s_{i+1}$.
The case 
$s_i \dashrightarrow s_{i+1}$
need not be considered, since it always corresponds
to a trivial condition; see \cite{KV}.

To establish some notation, we will consider some fixed 
function $f $ from $  \{ 1, \dots, \allowbreak  n-1\} $ to
$  \{  \rightarrow, \leftarrow, \dashleftarrow \}  $
and we will write
  $s_{i}   \mathrel{\stackrel{i}{\leftrightharpoon}}    s_{i+1}$ 
to mean $s_{i}   \mathrel{f(i)}    s_{i+1}$.
The \emph{pattern path associated to $f$}
is  the path 
$ \bar x = s_1   \mathrel{\stackrel{1}{\leftrightharpoon}} 
s_2   \mathrel{\stackrel{2}{\leftrightharpoon}}  
s_3   \mathrel{\stackrel{3}{\leftrightharpoon}} \dots 
s_n = \bar z$.  
Thus a sequence $s_1, s_2, \dots$
realizes the pattern path associated to $f$ 
if  $s_{i}   \mathrel{\stackrel{i}{\leftrightharpoon}}   s_{i+1}$ holds for every 
$i < n$. Notice that we will always assume 
$\bar x = s_1$ and $s_n = \bar z$. 
\end{assumption}

Our aim is to show that, under the
above assumptions, we can further have
$s_i \dashrightarrow s_j$, for every $i \leq j \leq n$.  
We  need a finiteness assumption.
We say that some variety $\mathcal V$ 
 is \emph{$2$-locally finite} if the free algebra
 $ \mathbf F_2$ in $\mathcal V$  
generated by $2$ elements is finite,
equivalently, if every algebra in $\mathcal V$ generated by
$2$ elements is finite.

\begin{lemma} \label{primaest}
Under the assumptions in \ref{ass},
fix some $k \leq n$.
  \begin{enumerate}[(i)]   
 \item  
Define  $s^{(1,k)}_i= \bar s_i(\bar x,s_k)$, for
$i \leq k$, and  $s^{(1,k)}_i= \bar s_i(\bar x,s_i)$,
for $i \geq k$. Note that the definitions
coincide when $i=k$.
For $p \geq 1$, define inductively 
$s^{(p+1,k)}_i= \bar  s_i( \bar x,s_k^{(p,k)})$,   
for $i \leq k$ and  
$s^{(p+1,k)}_i= \bar  s_i( \bar x,s_i^{(p,k)})$,   
for $i \geq k$.

\noindent Then $\bar x =  s^{(p,k)}_1 $, 
$  s^{(p,k)}_{n} =  \bar z$
and  $s^{(p,k)}_i   \mathrel{\stackrel{i}{\leftrightharpoon}}    s^{(p,k)}_{i+1}$,
for every  $i < n$ and every $p \geq 1$.

\noindent
If $1 \leq i \leq  j \leq k$ and 
$s_i \dashrightarrow s_j$,
then $s^{(p,k)}_i \dashrightarrow s^{(p,k)}_j$.

\item
Suppose further that $\mathcal V$ is $2$-locally finite.
Then there is some
$p \geq 1$ such that
the sequence $(s^{(p,k)}_i  ) _{i \leq n} $
still satisfies Assumption \ref{ass} and furthermore,  
 for every $ i \leq  k$, $s^{(p,k)}_i \dashrightarrow s^{(p,k)}_k$.  
 \end{enumerate} 
 \end{lemma}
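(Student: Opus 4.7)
The plan is to prove (i) by induction on $p$ and then (ii) by a finiteness argument about the orbit of $s_k$ under the map $g(u) := \bar s_k(\bar x, u)$.

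First I would verify (i). The boundary conditions $s^{(p,k)}_1 = \bar x$ and $s^{(p,k)}_n = \bar z$ hold because $\hat s_1$ is the first-projection term and $\hat s_n$ the second-projection term, absorbed into the corresponding argument of the recursive definition. For each adjacency $s^{(p,k)}_i \mathrel{\stackrel{i}{\leftrightharpoon}} s^{(p,k)}_{i+1}$ I would double-induct on $p$ and $i$. When $i < k$ both sides share the same inner element ($s_k^{(p-1,k)}$ for $p \geq 2$, or $s_k$ for $p=1$), so Proposition \ref{lem}(v)(f) together with its reverse-arrow version (vi) and the dashed analogue transfers the edge directly from the original $s_i \mathrel{\stackrel{i}{\leftrightharpoon}} s_{i+1}$. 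When $i \geq k$ the inner elements differ, but they are themselves $\leftrightharpoon$-related by the inductive hypothesis, and the same clauses close the step. Finally, if $1 \leq i \leq j \leq k$ and $s_i \dashrightarrow s_j$, then $s^{(p,k)}_i$ and $s^{(p,k)}_j$ both have the form $\bar s_\bullet(\bar x, u)$ with a common inner element $u$, so Proposition \ref{lem}(v)(e) combined with reflexivity of $u$ gives $s^{(p,k)}_i \dashrightarrow s^{(p,k)}_j$.

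For (ii), set $u_p := s_k^{(p,k)}$, so that $u_{p+1} = g(u_p)$ by the recursion. Since $\mathcal V$ is $2$-locally finite, $F_2$ is finite and the orbit $(u_p)_p$ is eventually periodic; I would pick some $p$ deep inside the periodic regime. The cases $i = 1$ and $i = k$ are immediate from Proposition \ref{lem}(ii) and reflexivity. For $1 < i < k$, I rewrite $s^{(p,k)}_i = \bar s_i(\bar x, u_{p-1})$ and $s^{(p,k)}_k = u_p$: Proposition \ref{lem}(v)(k) second statement, applied with $r = s_i$, $s' = \bar x$, $s'' = u_{p-1}$, $s = u_p$, reduces the desired arrow $s^{(p,k)}_i \dashrightarrow s^{(p,k)}_k$ to the pair $\bar x \dashrightarrow u_p$ (automatic from Proposition \ref{lem}(ii)) and $u_{p-1} \dashrightarrow u_p$.

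The hard part will be the second reduction $u_{p-1} \dashrightarrow u_p$. The naive attempt -- applying Proposition \ref{lem}(v)(e) to the relation $s_k \dashrightarrow \bar z$ -- only yields monotonicity of $g$ with respect to $\dashrightarrow$ and hence the backward arrow $u_{p+1} \dashrightarrow u_p$, the wrong direction. My plan is to secure the forward arrow via the cycle identification $u_p = u_{p+d}$ supplied by $2$-local finiteness, where $d$ is the eventual period: the backward $\dashrightarrow$-chain around the loop, combined with the auxiliary identity $\bar s_i(\bar x, u) \dashrightarrow u$ (which comes from $s_i \dashrightarrow \bar z$ and Proposition \ref{lem}(v)(e)) and with Proposition \ref{lem}(v)(k), should allow one to close the loop and extract the required forward arrow. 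Making this wrap-around argument work uniformly in the period $d$ -- so in particular for periods larger than $2$ -- is the delicate technical heart of the proof.
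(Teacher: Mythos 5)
Your part (i) and the overall strategy for part (ii) match the paper's proof: part (i) is the same induction on $p$ via Proposition \ref{lem}(i), (v)(e)(f) and the reversed version (vi), and in part (ii) the paper likewise uses finiteness of $F_2$ to find a repetition in the sequence $u_p=s_k^{(p,k)}$ and then reduces everything to dashed arrows into $s_k^{(p,k)}$ via the second statement of Proposition \ref{lem}(v)(k). So far this is correct and essentially identical to the paper.

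The step you leave open --- the forward arrow $u_{p-1}\dashrightarrow u_p$ --- is the only real content of (ii), and your sketch of how to close it is framed in a way that would not work as stated: a ``backward $\dashrightarrow$-chain around the loop'' cannot simply be composed, because $\dashrightarrow$ is not transitive (cf.\ Remark \ref{notadj}; the paper deliberately avoids transitive closures). The fix, which is what the paper does, is to keep the \emph{target} of the dashed arrow fixed throughout. Choose $p>p'>0$ with $u_p=u_{p'}$ (any repetition supplied by finiteness will do; the eventual period is irrelevant). Prove by induction on $q\ge p'$ that $u_q\dashrightarrow u_{p'}$: the base case is Proposition \ref{lem}(i), and the inductive step applies the second statement of (v)(k) (or (v)(e) with the second component of the target being the projection $p_2$) to $u_{q+1}=\bar s_k(\bar x,u_q)$, using $\bar x\dashrightarrow u_{p'}$ from (ii) and $u_q\dashrightarrow u_{p'}$ from the inductive hypothesis. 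Taking $q=p-1$ and substituting $u_{p'}=u_p$ gives exactly $u_{p-1}\dashrightarrow u_p$, and one further application of (v)(k) yields $s_i^{(p,k)}=\bar s_i(\bar x,u_{p-1})\dashrightarrow u_p=s_k^{(p,k)}$ for every $i\le k$. The paper phrases this as unrolling $s_i^{(p,k)}$ into a nest of $p-p'$ applications of $\bar s_k$ around the innermost element $u_{p'}$ and iterating (v)(k) from the inside out, which is the same computation. In particular your worry about ``periods larger than $2$'' is unfounded: once the target is held fixed at $u_{p'}$, the wrap-around is a one-line induction.
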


\begin{proof}
(i) is immediate from Proposition \ref{lem}(i), (v)(e)(f), (vi), by an induction,
using the assumptions.
For example, if $  s^{(p,k)}_{n} =  \bar z$,
then $s^{(p+1,k)}_n= \bar  s_n( \bar x,s_n^{(p,k)})
=\bar  s_n( \bar x, \bar z) = \bar z$, since $s_n= \bar z$,
that is, $\hat s_n$ is the second projection.  
As another example, if $k \leq i < n $ and 
 $s^{(p,k)}_i \leftarrow s^{(p,k)}_{i+1}$,
then $s^{(p+1,k)}_i= \bar  s_i( \bar x,s_i^{(p,k)})
\leftarrow \bar  s_{i+1}( \bar x,s_{i+1}^{(p,k)})  = s^{(p+1,k)}_{i+1}$
by the reversed version (vi) of Proposition \ref{lem}(v)(b). 

(ii) Since $F_2$ is finite, there are $p > p' >0$ such that 
$s^{(p,k)}_k = s^{(p',k)}_k$. 
If $i  \leq k$, then $s^{(p,k)}_i= \bar  s_i(\bar  x, \bar  s_k(\bar  x, \dots \bar  
s_k( \bar x,s_k^{(p',k)}) \dots ))$,
with $p-p'$ open parenthesis and $p-p'$ closed  parenthesis on the
base line. 
Then
$  s^{(p,k)}_i =
\bar  s_i(\bar  x, \bar  s_k( \bar  x, 
\dots \bar  s_k(\bar x,s^{(p',k)}_k) \dots )) 
\dashrightarrow 
s^{(p',k)}_k =
s^{(p,k)}_k $,
by Proposition \ref{lem}(i)(ii) and iterating the second statement in (v)(k).
\end{proof}

\begin{proposition} \labbel{proplf}
Suppose that  $n \geq 2$
and $f $ is a function from $  \{ 1, \dots, n-1\} $ to
$  \{  \rightarrow, \leftarrow, \dashleftarrow \}  $.
If $\mathcal V$ is $2$-locally finite and $\mathcal V$  
realizes the pattern path associated to $f$,
then the path can be realized in $\mathcal V$ 
by $s_1,s_2, \dots \in F_2$ in such a way that 
$s_i \dashrightarrow s_j$, for every $i \leq j \leq n$.    
 \end{proposition}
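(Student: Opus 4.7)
The plan is to apply Lemma \ref{primaest}(ii) in sequence, with pivots $k = 2, 3, \dots, n$, so that after the application with pivot $k$ the resulting realization admits dashed right arrows $s_i \dashrightarrow s_j$ for every $1 \leq i \leq j \leq k$. After the final step with $k = n$ this yields the full set of dashed arrows required by the proposition.

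Formally, I would induct on $k$. The base case $k = 1$ is trivial, using $s_1 \dashrightarrow s_1$ from Proposition \ref{lem}(i) together with the convention that a solid arrow also counts as dashed. For the inductive step, assume that the current sequence $s_1, \dots, s_n$ realizes the pattern path associated to $f$ and satisfies $s_i \dashrightarrow s_j$ for all $1 \leq i \leq j \leq k$. Apply Lemma \ref{primaest}(ii) to this sequence with pivot $k+1$, producing a new sequence $s_i^{(p, k+1)}$ for some $p$ supplied by the $2$-local finiteness assumption. Part (ii) of the lemma guarantees both that the new sequence still realizes the pattern path and that $s_i^{(p, k+1)} \dashrightarrow s_{k+1}^{(p, k+1)}$ for every $i \leq k+1$. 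Part (i) of the lemma transports every previously established arrow $s_i \dashrightarrow s_j$ with $i \leq j \leq k$ to a dashed arrow $s_i^{(p, k+1)} \dashrightarrow s_j^{(p, k+1)}$ in the new sequence, since the pair $(i,j)$ satisfies $j \leq k \leq k+1$ and so lies within the preservation range of the lemma. Combining the new and old arrows, the updated sequence realizes the pattern path and has $s_i \dashrightarrow s_j$ for every $1 \leq i \leq j \leq k+1$, closing the induction.

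The main obstacle the proof must negotiate is that each application of Lemma \ref{primaest} rebuilds the \emph{entire} sequence, so one might worry that gaining arrows to the new pivot would destroy the arrows obtained in previous iterations. This is precisely what the preservation clause in Lemma \ref{primaest}(i) is designed to prevent: as long as an already existing dashed arrow connects indices both of which fall within $[1, k+1]$, the transformation carries it over intact. Because the induction enlarges the pivot by exactly one at each step, every accumulated arrow always lies within the range preserved by the next application, and no regression occurs. The $2$-local finiteness hypothesis enters only through Lemma \ref{primaest}(ii), where the finiteness of $F_2$ allows one to pick $p > p' > 0$ with $s_k^{(p,k)} = s_k^{(p',k)}$, producing via iterated applications of Proposition \ref{lem}(v)(k) the desired dashed arrow from each $s_i^{(p,k)}$ to $s_k^{(p,k)}$; this is the sole place where the finiteness assumption is used.
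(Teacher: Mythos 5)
Your proposal is correct and follows essentially the same route as the paper: a finite induction on the pivot $k$, using Lemma \ref{primaest}(ii) to obtain the new dashed arrows into $s_k$ and the preservation clause of Lemma \ref{primaest}(i) to carry over the previously accumulated arrows. The only difference is a harmless index shift in how the inductive step is phrased.
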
  

\begin{proof} 
By a finite induction on $k \geq 1$,
we prove that, for every $k \leq n$,  there is a sequence such that 
$s_i \dashrightarrow s_j$, for every $i \leq j \leq k$.
The case $k=n$ gives the proposition.     

The base case $k=1$ is Proposition \ref{lem}(i).

If $k >1$ and the statement holds for $i \leq j \leq k-1$ for some sequence,
then  the sequence constructed in Lemma \ref{primaest}(ii) 
satisfies $s_i \dashrightarrow s_k$, for every $i \leq k$. 
By \ref{primaest}(i) and the inductive assumption, the new sequence also
satisfies  $s_i \dashrightarrow s_j$, for every $i \leq j \leq k-1$.
This completes the induction and thus the proof of the proposition.
\end{proof} 

In the next theorem, for $2$-locally finite varieties,
 we generalize Theorems \ref{thm} and \ref{thmag},
  to the effect that if some variety $\mathcal V$  
realizes some pattern path, then $\mathcal V$ realizes a path
in which any number of solid left arrows of our choice
are changed into solid right arrows.

\begin{theorem} \labbel{thmlf} 
Suppose that  $n \geq 2$
and $f, g $ are functions from $  \{ 1, \dots, n-1\} $ to
$  \{  \rightarrow, \leftarrow, \dashleftarrow \}  $
such that, for every $i \leq n$,  if $f(i) \neq {\leftarrow} $,
then $g(i)=f(i)$ (in other words, $f$ and $g$ possibly differ only on those $i$
such that $f(i)  = {\leftarrow }$).  
  \begin{enumerate}    
\item 
If $\mathcal V$ 
realizes the pattern path associated to $f$ by a sequence
$s_1, s_2, \dots$ such that  
$s_i \dashrightarrow s_j$, for every $i \leq j \leq n$,
then $\mathcal V$ 
realizes the pattern path associated to $g$.
\item 
 If $\mathcal V$ is $2$-locally finite and $\mathcal V$  
realizes the pattern path associated to $f$,
then $\mathcal V$ 
realizes the pattern path associated to $g$.
  \end{enumerate}  
\end{theorem}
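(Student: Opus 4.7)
The plan is to prove part (1) by induction on the cardinality of $K := \{i < n : f(i) = {\leftarrow} \text{ and } g(i) = {\rightarrow}\}$; part (2) then follows by first applying Proposition \ref{proplf} to realize $f$'s pattern by a sequence enjoying the forward dashed arrows $s_i \dashrightarrow s_j$ (for all $i \leq j \leq n$) required by the hypothesis of part (1). The base case $|K| = 0$ is immediate, since any remaining discrepancy between $f$ and $g$ then occurs at indices with $f(i) = {\leftarrow}$ and $g(i) = {\dashleftarrow}$, and a solid edge is by convention also a dashed edge, so the original sequence $(s_i)$ itself realizes $g$.

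For the inductive step I pick $k \in K$ and mimic the construction from Claim \ref{cl2}: define
\[
s^*_i = \bar s_i(s_i, s_{k+1}) \text{ for } i \leq k, \qquad s^*_i = \bar s_i(s_k, s_i) \text{ for } i \geq k+1.
\]
The endpoints $s^*_1 = \bar x$ and $s^*_n = \bar z$ are preserved because $\hat s_1$ and $\hat s_n$ are the projections. The critical reversal at position $k$ follows from Proposition \ref{lem}(v)(d) applied with $s = s_k$, $r = s_{k+1}$, $s' = r' = s_k$, $s'' = r'' = s_{k+1}$: the one nontrivial hypothesis $s_{k+1} \dashrightarrow s_k$ is furnished by the very edge $s_k \leftarrow s_{k+1}$ that we are reversing (solid implies dashed), yielding $s^*_k \rightarrow s^*_{k+1}$.

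The main obstacle is the case analysis needed to verify that at every other position $i \neq k$ the arrow type $f(i)$ is preserved in $(s^*_i)$, and that the forward dashed arrows $s^*_i \dashrightarrow s^*_j$ for $i \leq j$ also survive (so that the induction hypothesis applies to the new sequence). For $i \neq k$ the check splits according to whether $i < k$ or $i > k$ and whether $f(i)$ is ${\rightarrow}$, ${\leftarrow}$, or ${\dashleftarrow}$; in each case exactly one of Proposition \ref{lem}(v)(a)--(b) or its reversed form from (vi) applies, with the ``sideways'' dashed hypothesis always reducing to some $s_\alpha \dashrightarrow s_\beta$ with $\alpha \leq \beta$, provided by the assumption. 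The forward dashed arrows for $(s^*_i)$ are obtained analogously from Proposition \ref{lem}(v)(a), the cross-boundary case $i \leq k < j$ being the only one requiring the conjunction of $s_i \dashrightarrow s_k$, $s_i \dashrightarrow s_j$, and $s_{k+1} \dashrightarrow s_j$, each of which is given by hypothesis. With $(s^*_i)$ realizing the modified function $f'$ that agrees with $f$ except that $f'(k) = {\rightarrow}$, we have $|K'| = |K| - 1$ and the induction closes.
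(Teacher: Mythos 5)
Your proposal is correct and follows essentially the same route as the paper: the same substitution $s^*_i = \bar s_i(s_i, s_{k+1})$ for $i \leq k$ and $s^*_i = \bar s_i(s_k, s_i)$ for $i \geq k+1$ (this is exactly \eqref{varr}), the same use of Proposition \ref{lem}(v)(d) to reverse the edge at position $k$, the same appeal to (v)(a)--(b) and (vi) to preserve the other edges and the forward dashed arrows, and the same reduction of part (2) to part (1) via Proposition \ref{proplf}. The paper phrases the iteration as "repeat a sufficient number of times" and delegates the verifications to Claim \ref{cl2}, whereas you organize it as an induction on $|K|$ and spell out the cross-boundary case explicitly, but the content is identical.
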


 \begin{proof}
(1) The proof goes as in the proof of Theorem \ref{thm}
with no essential modification.

 In detail, if $f(h)  = {\leftarrow }$
and $s_1, s_2, \dots $ is a sequence realizing the path associated 
to $f$, define another sequence  $s_1^*, s_2^*, \dots $
by \eqref{varr}. Then the sequence of the $s_i^*$
realizes the path associated to the function $f'$
such that  $f'(h)  = {\rightarrow }$
and $f'$ coincides with $f$ on all the $i$ different from $h$.
Moreover, all the relations of the form 
$s_i \dashrightarrow s_j$, for  $i \leq j \leq n$ are preserved.
The proof is exactly the same as in Claim \ref{cl2}. 
 Note that here we have $s_i \dashrightarrow s_j$, for every $i \leq j \leq n$,
hence we need not to deal with the parity of elements
(recall that in (*) in Proposition \ref{aggfrec}  we do not necessarily have 
$s_i \dashrightarrow s_{i+1}$, for $i$ even).

To prove (1) iterate the above procedure a sufficient number of times.

(2) is immediate from (1) and Proposition \ref{proplf}.
 \end{proof}

\begin{definition} \labbel{2hd} \cite{daysh} 
For $n \geq 3$, a variety $\mathcal V$  is \emph{$n$-directed 
 with alvin heads}
 if $\mathcal V$ realizes the path
$\bar x= s_1 \leftarrow s_2 \rightarrow s_3 \rightarrow s_4 \rightarrow 
\dots \rightarrow s_{n-3} \rightarrow s_{n-2} 
\rightarrow s_{n-1} \leftarrow s_n= \bar z$. 

For $n \geq 3$, a variety $\mathcal V$  is \emph{$n$-two-headed
 directed Gumm}
 if $\mathcal V$ realizes the path
$\bar x= s_1 \dashleftarrow s_2 \rightarrow s_3 \rightarrow s_4 \rightarrow 
\dots \rightarrow s_{n-3} \rightarrow s_{n-2} 
\rightarrow s_{n-1} \dashleftarrow s_n= \bar z$. 
 \end{definition}   

The next corollary is immediate from Theorem \ref{thmlf}(2). 

\begin{corollary} \labbel{corg}
Suppose that   $\mathcal V$ is a  $2$-locally finite variety.

If $n \geq 3$ and $\mathcal V$ is  $n$-directed 
 with alvin heads, then $\mathcal V$ is $n$-directed distributive.

More generally, if $\mathcal V$ realizes some undirected path
as in Assumption \ref{ass} with $n-1$ edges and with all arrows as
solid, then $\mathcal V$ is $n$-directed distributive.   

If $n \geq 4$, $n$ even
  and $\mathcal V$ is $n$-defective Gumm (that is, $\mathcal V$
realizes the path from Example \ref{ex}(e)), then $\mathcal V$ 
 is $n$-two-headed
 directed Gumm.
 \end{corollary}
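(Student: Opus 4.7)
The plan is to apply Theorem \ref{thmlf}(2) three times, once for each clause, by choosing an appropriate pair of pattern functions $f, g : \{1, \dots, n-1\} \to \{\rightarrow, \leftarrow, \dashleftarrow\}$. Recall that the hypothesis of the theorem permits $g$ to differ from $f$ only at indices $i$ with $f(i) = {\leftarrow}$, that is, it allows us to convert an arbitrary subset of the solid left arrows into solid right arrows, while leaving every solid right arrow and every dashed left arrow untouched. So for each of the three statements it suffices to check that the source path contains only solid left arrows at the positions where the target path differs, and that the target path is the one claimed.

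For the first claim, let $f$ encode the $n$-directed-with-alvin-heads path, so that $f(1) = f(n-1) = {\leftarrow}$ while $f(i) = {\rightarrow}$ for $2 \leq i \leq n-2$. Set $g(i) = {\rightarrow}$ for all $i$. Then $g$ and $f$ agree except at the two endpoint positions where $f$ takes the value $\leftarrow$, so Theorem \ref{thmlf}(2) applies and produces the path $\bar x \rightarrow s_2 \rightarrow s_3 \rightarrow \dots \rightarrow s_{n-1} \rightarrow \bar z$, which by Example \ref{ex}(b) is the $n$-directed distributivity condition.

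For the second claim, let $f$ be any function with values in $\{\rightarrow, \leftarrow\}$ (no dashed arrows occur in the hypothesis), and again take $g$ to be the constant function with value $\rightarrow$. Then $g$ differs from $f$ only where $f(i) = {\leftarrow}$, so Theorem \ref{thmlf}(2) yields directed distributivity in the same way. For the third claim, let $f$ encode the $n$-defective-Gumm path from Example \ref{ex}(e), so that $f(1) = f(n-1) = {\dashleftarrow}$ and $f(i) \in \{\rightarrow, \leftarrow\}$ for $2 \leq i \leq n-2$. Let $g$ coincide with $f$ at $i = 1$ and $i = n-1$ (so the dashed arrows are preserved, as required) and set $g(i) = {\rightarrow}$ for $2 \leq i \leq n-2$. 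Since the only positions where $g$ and $f$ disagree are those with $f(i) = {\leftarrow}$, Theorem \ref{thmlf}(2) gives the path $\bar x \dashleftarrow s_2 \rightarrow s_3 \rightarrow \dots \rightarrow s_{n-1} \dashleftarrow \bar z$, which is precisely the $n$-two-headed directed Gumm condition of Definition \ref{2hd}.

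There is essentially no real obstacle here; the entire argument is bookkeeping to verify that each requested transformation only alters positions carrying a solid $\leftarrow$, so that Theorem \ref{thmlf}(2) is applicable, and that the resulting pattern path is the named Maltsev condition. The reason a proof is possible at all is that $2$-local finiteness has been absorbed into Theorem \ref{thmlf}(2), via Proposition \ref{proplf}, which upgrades any undirected path to one satisfying the rich auxiliary connection $s_i \dashrightarrow s_j$ for all $i \leq j$.
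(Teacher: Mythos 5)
Your proposal is correct and follows exactly the route the paper intends: the paper states that the corollary is immediate from Theorem \ref{thmlf}(2), and your argument just spells out the three choices of $f$ and $g$ (converting precisely the solid left arrows to solid right arrows while preserving the dashed arrows in the defective Gumm case), which is all that is needed.
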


\begin{problems} \labbel{prob}
(a) Can we remove the assumption that $\mathcal V$ is $2$-locally finite
 in Theorem \ref{thmlf}(2) 
and Corollary \ref{corg}?

Of course, the problem has affirmative answer if the assumption of
$2$-local finiteness can be removed from Proposition \ref{proplf}.

Remark: the point is \emph{not} that in Section \ref{main}
we only proved and used (*)
from Proposition \ref{aggfrec}, rather than
$s_i \dashrightarrow s_j$, for every $i \leq j \leq n$.
The point \emph{is} that we needed the arrows in the path to 
alternate between left and right, in order to carry over the
proof of Proposition \ref{aggfrec}.  

(b) Can we simplify the proofs of the main Theorems \ref{thm}
and \ref{thmag}? In particular, is there some hidden 
combinatorial principle underlying the  compositions we have performed?
Were this the case, it would be probably very useful in similar
contexts, for example when dealing with  SD($\vee$) terms,
or when comparing the number of Day and Gumm terms 
in a congruence modular variety. On the other hand,
the results we have proved are quite unexpected, hence
it would be really surprising if a significantly simpler proof
could be devised.
 \end{problems}

\begin{problem} \labbel{probmod}
Does every $n$-modular variety satisfy
the relation identity 
\begin{equation}\labbel{mode}    
  \alpha (R \circ R) \subseteq \alpha R \circ \alpha R \circ  \dots
  \end{equation} 
with $n-1$ occurrences of $\circ$ on the right? Here
juxtaposition denotes intersection, $R$ is a variable for
a reflexive and admissible relation   and $\alpha$ a variable for congruences.

In \cite{ricmb,ricm} we showed that every congruence modular 
variety satisfies \eqref{mode}, but the proof furnishes
a much larger number of factors
on the right.
 \end{problem}

\begin{acknowledgement} 
We thank anonymous referees for useful suggestions.
 \end{acknowledgement}

\end{document}